\newcommand{\id}{\operatorname{id}}
\newcommand{\Stab}{\operatorname{Stab}}
\newcommand{\SL}{\operatorname{SL}}
\newcommand{\Fix}{\operatorname{Fix}}
\newcommand{\Aut}{\operatorname{Aut}}
\newcommand{\GL}{\operatorname{GL}}
\newcommand{\inn}{inn}
\newtheorem{theorem}{Theorem}[section]
\newtheorem{lemma}[theorem]{Lemma}
\newtheorem{prop}[theorem]{Proposition}
\theoremstyle{definition}
\newtheorem{defn}[theorem]{Definition}
\newtheorem{example}[theorem]{Example}
\newtheorem{assumption}[theorem]{Assumption}
\numberwithin{equation}{section}
\begin{document}

\title{Margulis' Super-Rigidity Theorem for non-lattices}

\author{Uri Bader}

\author{Alex Furman}



\maketitle

\begin{abstract}
We give a Super-Rigidity theorem a la Margulis which applies for a wider class of groups.
In particular it applies to subgroups which are not assumed to be lattices in the ambient group.
Our proof is based on the notion of Algebraic Representation of Ergodic Actions.
\end{abstract}

\section{Introduction}

Margulis' Super-Rigidity Theorem \cite[Theorem~VII.5.6]{margulis-book} deals with the setting of a lattice $\Gamma$ in a semisimple group $S$ and asserts that,
under certain assumptions, representations of $\Gamma$ could be continuously extended to $S$.
In this note we extend the scope of this theorem.
Let us make the following general definition.

\begin{defn}
Let $S$ be a locally compact group and $\Gamma<S$ a closed subgroup.
We will say that the inclusion $\Gamma<S$ is SR if
for every group $G$ which is the $k$-points of a connected adjoint $k$-simple algebraic group defined over a local field $k$
and every continuous homomorphism $\delta:\Gamma \to G$ 
which is unbounded and Zariski dense in $G$
there exists a unique continuous homomorphism $\rho:S\to G$
such that $\delta=\rho|_{\Gamma}$.
\end{defn}

\begin{theorem} \label{thm:main}
Let $S$ be a locally compact second countable group and $\Gamma<S$ a closed subgroup.
Assume that there exists a topologically generating (finite or infinite) sequence of abelian subgroup $T_0,T_1,\ldots <S$ such that for every $i>0$,
$T_i$ commutes with $T_{i-1}$ and for every $i\geq 0$, $T_i$ acts ergodically on $S/\Gamma$.
Assume also that $S/N$ has trivial abelianization, where $N$ is the closed normal subgroup generated by $\Gamma$.
Then $\Gamma<S$ is SR.
\end{theorem}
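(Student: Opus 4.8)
The plan is to recast the statement as a problem of straightening a cocycle and to solve it with the theory of algebraic representations of ergodic actions, using the chain $T_0,T_1,\dots$ as the driving mechanism. \textbf{Reductions and set-up.} Since $\delta$ is unbounded, $\mathbf{G}$ cannot be $k$-anisotropic --- over a local field a $k$-simple $k$-anisotropic group has compact $k$-points --- so $\mathbf{G}$ possesses proper parabolic $k$-subgroups. Fix a Borel section of $S\to S/\Gamma$ and let $\alpha\colon S\times S/\Gamma\to\mathbf{G}(k)$ be the cocycle it produces from $\delta$. By the standard dictionary between cocycles for $S\acts S/\Gamma$ and homomorphisms out of $\Gamma$, the existence of a continuous extension of $\delta$ to $S$ is equivalent --- up to an inessential conjugation that one undoes at the end --- to $\alpha$ being measurably cohomologous to the inflation of a homomorphism $S\to\mathbf{G}(k)$; and since a measurable homomorphism between lcsc groups is automatically continuous, it suffices to produce such a cohomology.

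\textbf{Local step.} For each $i$ the group $T_i$ is abelian, so its action on $(S/\Gamma,\mu)$ is amenable. Applying the theory of algebraic representations of ergodic actions to the ergodic system $\bigl(T_i\acts S/\Gamma,\ \alpha|_{T_i}\bigr)$ produces a canonical, initial algebraic representation; amenability forces the underlying $\mathbf{G}$-variety to embed $\mathbf{G}$-equivariantly into a flag variety of $\mathbf{G}$ and to be nontrivial (there is no $\mathbf{G}(k)$-invariant probability measure on a flag variety of the isotropic group $\mathbf{G}$, so the associated equivariant map is genuinely nonconstant).

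\textbf{Global step and the main difficulty.} It remains to assemble these local data into a single straightening of the full cocycle. The commutation of $T_i$ with $T_{i-1}$ is precisely what turns the action of an element of $T_i$ on $S/\Gamma$, together with the induced conjugation of the cocycle, into an automorphism of the ergodic system attached to $T_{i-1}$; functoriality of the initial algebraic representation under such automorphisms, combined with the ergodicity of the $T_j$'s, lets one transport the canonical datum from one link of the chain to the next. Since $T_0,T_1,\dots$ topologically generate $S$ and the locus where an almost-everywhere equivariance identity holds is a closed subgroup, this yields a straightening of $\alpha$ to the inflation of a measurable --- hence continuous --- homomorphism $\rho_0\colon S\to\mathbf{G}(k)\times\mathbf{A}(k)$, the auxiliary abelian factor $\mathbf{A}(k)$ reflecting the torus parts of the parabolics encountered; the Zariski density of $\delta(\Gamma)$ is used along the way to keep the construction from degenerating. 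I expect this assembly step to be the crux: one must keep every intermediate object canonical enough to survive transport along the commuting chain, and handle the passage to the topologically generated limit --- this is exactly where the machinery of algebraic representations is indispensable.

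\textbf{Conclusion and uniqueness.} Comparing the straightening with the definition of $\alpha$ shows that $\rho_0|_\Gamma$ coincides, after conjugation in $\mathbf{G}(k)$, with $\delta$ placed in the $\mathbf{G}(k)$-factor; in particular the $\mathbf{A}(k)$-component of $\rho_0$ is trivial on $\Gamma$, hence on the closed normal subgroup $N$ it generates, hence factors through $(S/N)^{\mathrm{ab}}=\{e\}$ and vanishes. Projecting to the $\mathbf{G}(k)$-factor and undoing the conjugation gives a continuous homomorphism $\rho\colon S\to\mathbf{G}(k)$ with $\rho|_\Gamma=\delta$. For uniqueness, suppose $\rho_1,\rho_2$ both extend $\delta$: then $s\mapsto(\rho_1(s),\rho_2(s))$ has Zariski closure $\mathbf{M}\le\mathbf{G}\times\mathbf{G}$ containing the diagonal (because $\delta(\Gamma)$ is Zariski dense), while the cocycle $(\alpha,\alpha)$ --- which is valued in the diagonal copy of $\mathbf{G}(k)$ --- is cohomologous to the inflation of $\rho_1\times\rho_2$, whose algebraic hull is $\mathbf{M}$; since cohomologous cocycles have conjugate algebraic hulls and $\mathbf{M}$ contains the diagonal, $\mathbf{M}$ must be the diagonal, i.e.\ $\rho_1=\rho_2$. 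Hence $\Gamma<S$ is SR.
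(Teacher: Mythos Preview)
Your overall strategy---propagating a canonical algebraic representation along the commuting chain $T_0,T_1,\dots$ and invoking the abelianization hypothesis to kill stray abelian factors---is in the right spirit, and your uniqueness argument via algebraic hulls is a legitimate variant of what the paper does. But there is a genuine gap in the existence argument.

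The problem is your Local step. You assert that the initial algebraic representation attached to $(T_i\acts S/\Gamma,\ \alpha|_{T_i})$ is ``genuinely nonconstant'' because an isotropic $\mathbf{G}$ admits no $\mathbf{G}(k)$-invariant probability measure on its flag variety. But the equivariance of the map $S/\Gamma\to\mathrm{Prob}(\mathbf{G}/\mathbf{P}(k))$ is only through the cocycle $\alpha|_{T_i}$, and a constant value $\nu$ would only have to be fixed by the elements $\alpha(t,x)\in\delta(\Gamma)$ arising from $t\in T_i$; nothing forces these to generate a Zariski-dense subgroup. In the paper's language: the initial object in the category of $T$-algebraic representations of $S$ is a coset variety $\mathbf{G}/\mathbf{H}$ with $\mathbf{H}$ normalized by $\delta(\Gamma)$, hence normal in the simple group $\mathbf{G}$, hence either $\{e\}$ or $\mathbf{G}$. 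The case $\mathbf{H}=\mathbf{G}$---where every $T$-algebraic representation collapses to a point---cannot be ruled out by your reasoning, and when it occurs your ``straightening'' $\rho_0$ is vacuous and the auxiliary factor $\mathbf{A}(k)$ never materializes.

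The paper closes this gap with a second, parallel theory of \emph{proper} (metric, not algebraic) representations. When $\mathbf{H}=\mathbf{G}$, one switches viewpoint and uses amenability of the $\Gamma$-action on $S/T$ (not of $T_i$ on $S/\Gamma$) together with \cite[Theorem~6.1]{BDL} to produce a nontrivial $T$-proper representation of $S$. Running the same chain-propagation machinery in this proper category yields a compact subgroup $K<G$ with $\delta(\Gamma)<N_G(K)$; one then shows that, because every $T$-algebraic representation is trivial, $K$ is Zariski dense in $\mathbf{G}$, and a separate lemma gives that $N_G(K)$ is compact. This contradicts the unboundedness of $\delta(\Gamma)$. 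Note that this is where the unboundedness hypothesis is actually consumed---not merely to guarantee that $\mathbf{G}$ is $k$-isotropic, as in your opening reduction. Without this proper-representation branch the proof is incomplete.
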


The proof of Theorem~\ref{thm:main} will be given in \S\ref{sec:main}.
We note that if $S$ is a connected, simply connected, semisimple group over a local field then it is generated by its root subgroup and these act ergodically on $S/\Gamma$ for every lattice $\Gamma<S$.
further, under a higher rank assumption on $S$ the root subgroups could be ordered in a sequentially commuting fashion.
Thus, our theorem above generalizes Margulis' Super-Rigidity Theorem.

\begin{example}
The inclusion $S=\SL_n(\mathbb{R})\ltimes \mathbb{R}^n$ and $\Gamma=\SL_n(\mathbb{Z})\ltimes \mathbb{Z}^n$ satisfies the hypothesis of 
Theorem~\ref{thm:main}.
For the groups $T_i$ we take the following sequentially commuting sequence of six abelian groups
\[
\begin{bmatrix}
    1 & * & \\
       & 1 & \\
       &    & 1
\end{bmatrix}
\begin{bmatrix}
    * \\
    0 \\
    0
\end{bmatrix},
\quad
\begin{bmatrix}
    1 & & * \\
       & 1 & \\
       &    & 1
\end{bmatrix}
\begin{bmatrix}
    0 \\
    0 \\
    0
\end{bmatrix},
\quad
\begin{bmatrix}
    1 & &   \\
       & 1 & *\\
       &    & 1
\end{bmatrix}
\begin{bmatrix}
    0 \\
    * \\
    0
\end{bmatrix},
\]
\[
\begin{bmatrix}
    1 & &  \\
    *  & 1 & \\
       &    & 1
\end{bmatrix}
\begin{bmatrix}
    0 \\
    0 \\
    0
\end{bmatrix},
\quad
\begin{bmatrix}
    1 & &   \\
       & 1 & \\
    *  &    & 1
\end{bmatrix}
\begin{bmatrix}
    0 \\
    0 \\
    *
\end{bmatrix},
\quad
\begin{bmatrix}
    1 & &   \\
       & 1 & \\
       & * & 1
\end{bmatrix}
\begin{bmatrix}
    0 \\
    0 \\
    0
\end{bmatrix}.
\]
To see that each of these groups acts ergodically on $S/\Gamma$ it is enough, by Howe-Moore Theorem, to see that $\SL_n(\mathbb{R})$
acts ergodically on $S/\Gamma$, or equivalently, that $\Gamma$ acts ergodically on $\mathbb{R}^n\simeq S/\SL_n(\mathbb{R})$.
The latter is equivalent to the ergodicty of $\SL_n(\mathbb{Z})$ on $\mathbb{R}^n/\mathbb{Z}^n$ which is well known.
\end{example}

\section{Proper representations} \label{sec:properreps}

In this section
we fix an lcsc group $T$ and a Lebesgue space $X$ on which $T$ acts ergodically.
We also fix a Polish group $G$ and a continuous homomorphism $\theta:T\to G$.
We will discuss the category of proper representations of $X$ associated with the above data.

By a proper action of $G$ we regard a Polish space $V$ endowed with an action of $G$
such that the map
\[ G\times V \to V\times V, \quad (g,v) \mapsto (v,gv) \]
is continuous and proper.
The properness assumption is equivalent to the assumption that for every precompact subsets $C_1,C_2\subset V$, the set $\{g~|~gC_1\cap C_2\neq\emptyset\}$
is precompact in $G$.

\begin{prop} \label{closed orbits}
If $V$ is a proper $G$ space than the $G$ orbits in $V$ are closed and the stabilizers are compact.
Furthermore, the orbit space $V/G$ is Hausdorff and
for every $v\in V$ the orbit map $G/G_v\to Gv$ is a homeomorphism.
\end{prop}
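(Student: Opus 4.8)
The plan is to extract everything from the defining property that the map $\Phi\colon G\times V\to V\times V$, $(g,v)\mapsto(v,gv)$, is continuous and proper, together with the reformulation that for precompact $C_1,C_2\subset V$ the set $\{g\mid gC_1\cap C_2\neq\emptyset\}$ is precompact in $G$. First I would handle the stabilizers: for fixed $v\in V$, the stabilizer $G_v$ is the preimage of the single point $(v,v)$ under $g\mapsto(v,gv)$, which is a proper map, so $G_v$ is compact (it is also closed since the action is continuous). Next, closedness of orbits: suppose $g_n v\to w$ in $V$. Then the pairs $(v,g_n v)$ lie in the compact set $\{v\}\times\overline{\{g_n v\}}$, and by properness of $\Phi$ their preimages $\{(g_n,v)\}$ lie in a precompact subset of $G\times V$; hence a subsequence $g_{n_k}\to g$ in $G$, and by continuity $g_n v\to gv$, so $w=gv\in Gv$. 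Thus every orbit is closed.

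For the orbit map $G/G_v\to Gv$, it is a continuous equivariant bijection of the quotient space onto the orbit (with its subspace topology from $V$); to see it is a homeomorphism I would check it is open, or equivalently that if $g_n v\to gv$ then $g_n\to g$ in $G/G_v$. By the argument above, from $g_n v\to gv$ we may pass to a subsequence with $g_{n_k}\to g'$ in $G$, and then $g' v=gv$, so $g'\in gG_v$, i.e.\ $g_{n_k}\to g$ in $G/G_v$; since every subsequence has a further subsequence converging to $g$ in $G/G_v$, the whole sequence converges there. This gives the inverse is continuous, so the orbit map is a homeomorphism. (Here one uses that $V$ is Polish, hence first countable, so sequential arguments suffice; one also uses that $G/G_v$ is metrizable, which holds since $G$ is Polish and $G_v$ is compact.)

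Finally, for Hausdorffness of $V/G$: I would show the orbit equivalence relation $R=\{(v,w)\mid Gw=Gv\}=\Phi(G\times V)$ is closed in $V\times V$, since a continuous proper map into a locally compact (indeed Polish) space — more precisely, a continuous map that is proper as a map to $V\times V$ — has closed image; a proper continuous map to any locally compact Hausdorff, or any metrizable, space is closed, and in particular its image is closed. Closedness of $R$ together with the fact that the quotient map $V\to V/G$ is open (images of open sets are saturated-open, a standard fact for group actions) implies $V/G$ is Hausdorff.

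The main obstacle I anticipate is being careful about exactly which topological hypotheses are needed where: properness is only asserted for the combined map $\Phi$, not separately for the action map $G\times V\to V$, so closedness of orbits and compactness of stabilizers must both be routed through $\Phi$ and the precompactness reformulation. The rest is a sequence of standard point-set arguments exploiting that all spaces in sight are Polish (hence metrizable and second countable), so that sequential compactness and sequential continuity are available.
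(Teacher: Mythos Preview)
Your argument is correct. The one expository slip is the parenthetical suggesting $V\times V$ is locally compact; it need not be, but you immediately fall back on metrizability, which is what you actually use, so nothing is lost.

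The route differs from the paper's. The paper proves compactness of stabilizers directly (as you do) and then establishes that $V/G$ is Hausdorff by the same ``proper map into a metrizable space is closed'' argument you give. From Hausdorffness it deduces the remaining two claims in one stroke: orbits are closed because they are preimages of points in a Hausdorff quotient, and the orbit map $G/G_v\to Gv$ is a homeomorphism by invoking Effros' Lemma \cite[Lemma~2.5]{effros}. You instead prove closedness of orbits and openness of the orbit map by direct sequential compactness arguments extracted from properness of $\Phi$, never appealing to Effros. Your approach is more self-contained and makes the role of properness completely explicit; the paper's is shorter on the page but outsources the nontrivial step (why a continuous equivariant bijection from $G/G_v$ onto a non-meager orbit is automatically open) to the cited lemma.
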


\begin{proof}
The fact that the stabilizers are compact follows at once from the definitions (taking $C_1=C_2$, a singleton),
noting that stabilizers are closed.
All remaining parts of the proposition follow from the fact that $V/G$ is Hausdorff
(the last statement follows from Effros' Lemma \cite[Lemma 2.5]{effros}).
This is a standard fact, which we now briefly recall:
any proper map into a metrizable space is closed, thus the set
$V\times V-\mbox{Im}(G\times V)$ is open in $V\times V$
and its image under the open map to $V/G\times V/G$ is again open
- thus its complement, i.e the diagonal, in the latter is closed.
\end{proof}

By a proper representation of $X$ we mean 
a proper action of $G$ on a Polish topological space $V$ 
together with a $T$-equivariant a.e defined measurable map $\phi:X\to V$.
By a standard abuse of notation we call such a proper representation $V$, and regard $\phi$ as $\phi_V$ in case of a possible confusion.
A morphism from a proper representation $V$ to a proper representation $U$ is given by a $G$-equivbariant continuous map $\alpha:V\to U$
such that $\phi_U$ agrees a.e with $\alpha\circ \phi_V$.
A proper representation is said to be transitive if the underlying action of $G$ on $V$ is transitive.

Note that if $X$ is a singleton and the image of $\Gamma$ in $G$ is non-compact then the category of proper representation of the $T$-space $X$ is empty.
This section is devoted to the study of proper representations, thus we make the following assumption.

\begin{assumption} \label{assum}
We will assume through out this section that the category of proper representation of the $T$-space $X$ is not empty.
\end{assumption}

\begin{theorem} \label{thm:initial proper}
There exists an initial object in the category of proper representations of the $T$-space $X$ and this initial object is transitive.
\end{theorem}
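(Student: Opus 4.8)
The plan is to construct the initial object as a transitive proper representation of minimal "complexity" and then verify its universal property. First I would set up a notion of complexity for transitive proper representations: since a transitive proper $G$-space is of the form $G/K$ with $K$ compact (by Proposition~\ref{closed orbits}), and the relevant data is a $T$-equivariant measurable map $\phi\colon X\to G/K$, the natural invariant attached to $V=G/K$ is the conjugacy class of the compact group $K$, ordered by (sub)conjugacy. The key point is to show that among all transitive proper representations of $X$ — the set of which is nonempty once we pass to orbits, using Assumption~\ref{assum} together with the fact that $V/G$ is a standard Borel space and $\phi$ composed with $V\to V/G$ is a $T$-invariant, hence (by ergodicity) essentially constant map, so $\phi$ lands essentially in a single orbit — there is one with $K$ minimal. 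For this I would argue that a descending chain of compact subgroups of $G$ stabilizes, or more robustly, use a maximality/Zorn argument on "reductions of structure group": given two transitive proper representations, form their fibre product $G$-space and observe that $\phi_{V_1}\times\phi_{V_2}$ maps $X$ into it equivariantly, landing essentially in one orbit, which is again a transitive proper representation dominating both — so the category of transitive proper representations is downward directed.

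The heart of the argument is therefore: (i) every proper representation receives a morphism from some transitive one, and (ii) the directed system of transitive proper representations has a least element which is then initial. For (i), given an arbitrary proper representation $\phi\colon X\to V$, compose with $V\to V/G$; since $V/G$ is Hausdorff and standard Borel and the composite is $T$-invariant, ergodicity of $T$ on $X$ forces it to be essentially a single point $[v_0]\in V/G$, so $\phi$ factors essentially through the orbit $Gv_0\cong G/G_{v_0}$, a transitive proper representation mapping to $V$. For (ii), I would take the fibre-product construction above to see the transitive proper representations form a directed system under domination; then I must extract an actual initial (least) object rather than merely a pro-object. Here I would use that domination $G/K_1 \to G/K_2$ corresponds to $K_1$ being subconjugate to $K_2$, so complexity is measured by compact subgroups up to conjugacy; a directed-downward family of such has an infimum realized by $K=\bigcap K_i$ along a suitable countable cofinal chain (second countability of $G$ gives a countable cofinal subfamily), and a compatible limit map $\phi\colon X \to G/K$ exists since the maps $\phi_{G/K_i}$ form a compatible family into the inverse limit $\varprojlim G/K_i = G/K$. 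One checks $G/K$ is still a proper $G$-space (an intersection of compacts is compact) and that $\phi$ is measurable and $T$-equivariant as an inverse limit of such.

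Finally I would verify the universal property: given any proper representation $U$, by (i) it is dominated-from-below by a transitive $U'$, which by (ii) is dominated by our candidate initial object $V_0=G/K$; composing gives a morphism $V_0\to U$, and uniqueness follows because two $G$-equivariant continuous maps $V_0\to U$ agreeing a.e.\ with $\phi_U\circ\phi_{V_0}$ must agree on the (dense, by transitivity and ergodicity) essential image of $\phi_{V_0}$, hence everywhere by continuity and the Hausdorff property of $U$. The main obstacle I anticipate is step (ii): ensuring that the inverse limit over a countable cofinal chain genuinely produces an object of the category — i.e., that properness survives the limit and that the limiting measurable map is well-defined and equivariant — and that this limit is cofinal, i.e., truly dominates every transitive proper representation and not just those in the chosen chain. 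Making the "minimal compact subgroup up to conjugacy" argument precise, given that conjugation can move the subgroups around, is the delicate part, and I expect the fibre-product/directedness observation to be the tool that tames it.
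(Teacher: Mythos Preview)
Your strategy aligns with the paper's: reduce to transitive representations via Lemma~\ref{lem:transproper}, use (countable) products to get a downward-directed poset (Lemma~\ref{lem:countableproducts}), and then extract a least element. The paper organizes the last step differently and resolves precisely the obstacle you flagged as ``the delicate part.''

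Rather than constructing an inverse limit over the whole directed system, the paper applies Zorn: since countable products (hence countable infima) exist, it suffices that every chain have a lower bound, i.e., that every chain admit a countable cofinal subchain. For this the paper does \emph{not} invoke second countability of $G$ directly; instead it fixes a greatest element $U_1\cong G/K$ of the given chain (adjoining one if needed), so that $K$ is compact and second countable, and pulls the entire chain back to the fiber over the chosen basepoint, obtaining a chain of transitive $K$-spaces. Lemma~\ref{lem:cofinal} then extracts a countable cofinal subchain by a covering argument: writing $K_\alpha\le K$ for the stabilizers, one has $K\setminus\bigcap_\alpha K_\alpha=\bigcup_\alpha (K\setminus K_\alpha)$, and second countability of the compact group $K$ yields a countable subcover which is then shown to be cofinal. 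This sidesteps both issues you anticipated: the $K_i$ in your sketch are only subconjugate, not nested, so ``$K=\bigcap K_i$'' and ``$\varprojlim G/K_i=G/K$'' are not literally available, and the question of whether the limiting object lies in the category disappears because countable products already do. Your identification of the poset with conjugacy classes of compact subgroups of $G$ is also slightly off --- different maps $\phi$ with the same stabilizer type need not give isomorphic objects --- and anchoring everything to a fixed top element $G/K$ is exactly what makes the bookkeeping work.

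Finally, uniqueness of morphisms out of a transitive object is simpler than your density argument: the equalizer of two $G$-maps $G/K\to V$ is $G$-invariant, hence empty or all of $G/K$ (Lemma~\ref{lem:transunique}).
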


Before proving the theorem we will give some lemmas.

\begin{lemma} \label{lem:transunique}
Let $\phi_U:X\to U$ be a transitive proper representation of $X$
and 
let $\phi_V:X\to V$ be any proper representation of $X$.
A morphism of proper representations $U\to V$, if exists, is unique.
\end{lemma}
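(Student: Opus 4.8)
The plan is to exploit the fact that the target-side data of a morphism $U\to V$ is essentially determined by a single point, together with the ergodicity of $T$ on $X$. Suppose $\alpha,\beta\colon U\to V$ are two morphisms of proper representations. Both are $G$-equivariant continuous maps, and both satisfy $\alpha\circ\phi_U=\phi_V=\beta\circ\phi_U$ almost everywhere on $X$. Since $G$ acts transitively on $U$, fixing any $u_0\in U$ the maps $\alpha$ and $\beta$ are completely determined by the points $\alpha(u_0)$ and $\beta(u_0)\in V$: indeed $\alpha(gu_0)=g\alpha(u_0)$ and likewise for $\beta$, so it suffices to show $\alpha(u_0)=\beta(u_0)$.

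First I would use the a.e.\ agreement $\alpha\circ\phi_U=\beta\circ\phi_U$ to produce a single point of $U$ where $\alpha$ and $\beta$ agree. The map $\phi_U\colon X\to U$ is defined a.e., so there is a point $x\in X$ (in fact a conull set of them) with $\alpha(\phi_U(x))=\beta(\phi_U(x))$; set $u=\phi_U(x)$. Now consider the ``agreement locus'' $E=\{u'\in U : \alpha(u')=\beta(u')\}$. Since $\alpha,\beta$ are continuous and $V$ is Hausdorff (it is Polish, or one may invoke that $V$ is a proper $G$-space so orbits are closed), $E$ is a closed subset of $U$. Moreover $E$ is $G$-invariant: if $\alpha(u')=\beta(u')$ then $\alpha(gu')=g\alpha(u')=g\beta(u')=\beta(gu')$. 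We have just shown $u\in E$, hence $E\supseteq Gu$, and since $G$ acts transitively on $U$ this forces $E=U$, i.e.\ $\alpha=\beta$ on all of $U$.

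The only point requiring a little care — and the step I expect to be the mild obstacle — is the passage from ``$\alpha\circ\phi_U=\beta\circ\phi_U$ a.e.'' to ``there exists an actual point $u\in U$ with $\alpha(u)=\beta(u)$'': this just needs the a.e.-defined map $\phi_U$ to have nonempty domain and nonempty essential image, which is automatic since $X$ is a (nontrivial) Lebesgue space and $\phi_U$ is defined on a conull set. One does not even need ergodicity here; ergodicity of $T$ on $X$ is relevant for the \emph{existence} of morphisms elsewhere, but for uniqueness the transitivity of the $G$-action on $U$ does all the work, via the $G$-invariance and closedness of the agreement locus $E$. I would present the argument in exactly the order above: reduce to equality at one point using transitivity, find such a point from the a.e.\ identity, then spread equality over the whole orbit $=U$ using $G$-invariance of the closed set $E$.
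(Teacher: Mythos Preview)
Your argument is correct and is essentially the paper's proof: the paper simply observes that the equalizer $\{u\in U\mid \alpha(u)=\beta(u)\}$ is $G$-invariant, hence empty or all of $U$ by transitivity, and it is nonempty since both morphisms satisfy $\alpha\circ\phi_U=\phi_V=\beta\circ\phi_U$ a.e. Your added remark that the equalizer is closed is true but unnecessary---transitivity alone forces a nonempty $G$-invariant subset to be everything.
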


\begin{proof}
For two $G$-maps $\alpha_1,\alpha_2:U\to V$, the equalizer $\{u\in U\mid \alpha_1(u)=\alpha_2(u)\}$ is $G$-invariant,
thus empty or full, as $U$ is transitive.
\end{proof}

\begin{lemma} \label{lem:transproper}
Let $\phi_V:X\to V$ be a proper representation of $X$.
Then there exists a transitive proper representation $\phi_U:X\to U$
and a morphism of proper representations $\alpha:U\to V$.
\end{lemma}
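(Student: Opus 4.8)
The plan is to push $\phi_V$ down to the orbit space and use ergodicity of $T$ to land inside a single orbit. First I would form the quotient $q\colon V\to V/G$. Since the orbit map of a topological-group action is always open and $V$ is second countable, $V/G$ is second countable, and by Proposition~\ref{closed orbits} it is Hausdorff; hence its Borel structure is countably separated. The composite $q\circ\phi_V\colon X\to V/G$ is measurable, and it is $T$-invariant because $\phi_V(tx)=\theta(t)\phi_V(x)$ lies in the same $G$-orbit as $\phi_V(x)$ for a.e.\ $x$.

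Next I would invoke ergodicity. For a measurable $T$-invariant map from an ergodic $T$-space to a countably separated Borel space, a routine argument (fix Borel sets $B_1,B_2,\dots$ separating the points of $V/G$; by ergodicity exactly one of $(q\circ\phi_V)^{-1}(B_n)$ and its complement is conull; intersect the conull choices over all $n$) shows that $q\circ\phi_V$ is essentially constant. Thus there is a $G$-orbit $U\defq Gv_0\subset V$ with $\phi_V(x)\in U$ for a.e.\ $x$.

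It then remains to check that $U$, endowed with the subspace topology, is a transitive proper representation of $X$ and that the inclusion is a morphism. By Proposition~\ref{closed orbits}, $U$ is closed in $V$, hence Polish, and the orbit map $G/G_{v_0}\to U$ is a homeomorphism; $U$ is transitive by construction. Properness of the $G$-action on $U$ is immediate from the precompactness criterion, since precompact subsets of $U$ are precompact in $V$. Setting $\phi_U\defq\phi_V$ on the conull set $\phi_V^{-1}(U)$ (and, say, $\phi_U\defq v_0$ elsewhere) gives a measurable $T$-equivariant map $X\to U$ — measurability because $U$ is Borel in $V$ and its subspace Borel structure is the correct one, and equivariance inherited from $\phi_V$. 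Finally the inclusion $\alpha\colon U\hookrightarrow V$ is $G$-equivariant and continuous and satisfies $\alpha\circ\phi_U=\phi_V$ a.e., so it is the required morphism.

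The main obstacle — essentially the only nonformal step — is the reduction to a single orbit, i.e.\ ensuring that $V/G$ is a well-behaved (countably separated) Borel space so that an ergodic invariant map into it is essentially constant; this is exactly where Proposition~\ref{closed orbits} and the second countability of $V$ are used. The verification that a single orbit is again a proper $G$-space and the corestriction of $\phi_V$ to it are then routine.
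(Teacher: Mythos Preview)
Your proof is correct and follows essentially the same approach as the paper's own proof: compose $\phi_V$ with the quotient $V\to V/G$, observe the result is $T$-invariant, apply ergodicity to land in a single orbit $U$, and take the inclusion as the morphism. The paper's proof is a two-sentence sketch of exactly this idea; you have simply filled in the details the paper leaves implicit (countable separation of $V/G$ via Proposition~\ref{closed orbits} and second countability, and the verification that the orbit $U$ is Polish with a proper $G$-action).
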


\begin{proof}
Observer that the map $X\to V\to V/G$ obtained by composing $\phi_V$ with the obvious quotient map is $T$ invariant,
hence essentially constant, by ergodicity.
It follows that the image of $X$ is supported on a single $G$-orbit $U\subset V$.
\end{proof}

\begin{lemma} \label{lem:countableproducts}
Both the category of proper representations and the subcategory of transitive proper representations admit countable products.
\end{lemma}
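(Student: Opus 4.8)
The plan is to realise the categorical product by the underlying topological product. Given a countable family $(V_n)_n$ of proper representations of $X$, set $V\defq\prod_n V_n$ with the product topology; this is again a Polish space. Let $G$ act diagonally and put $\phi_V\defq(\phi_{V_n})_n\colon X\to V$, which is defined on the intersection of the domains of the $\phi_{V_n}$ (a conull set, being a countable intersection of conull sets), is measurable because each of its coordinates is, and is $T$-equivariant because each $\phi_{V_n}$ is. The one substantive point is that the diagonal action of $G$ on $V$ is proper: if $C_1,C_2\subseteq V$ are precompact, then so are their coordinate projections $C_1^{(n)},C_2^{(n)}\subseteq V_n$, and, choosing any single index $0$,
\[
\{g\mid gC_1\cap C_2\neq\emptyset\}\subseteq\{g\mid gC_1^{(0)}\cap C_2^{(0)}\neq\emptyset\},
\]
where the right-hand side is precompact since $V_0$ is a proper $G$-space. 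Hence $V$ is a proper representation of $X$.

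Next I would check the universal property in the category of proper representations. Each projection $\pi_n\colon V\to V_n$ is a morphism of proper representations, being continuous, $G$-equivariant, and satisfying $\pi_n\circ\phi_V=\phi_{V_n}$ identically. Conversely, if $W$ is a proper representation equipped with morphisms $\beta_n\colon W\to V_n$, then $\beta\defq(\beta_n)_n\colon W\to V$ is continuous and $G$-equivariant, and $\beta\circ\phi_W=\phi_V$ off a null set — this is exactly where countability enters, the exceptional set being the countable union of the exceptional sets of the $\beta_n$. Uniqueness of such a $\beta$ is immediate, since a point of $V$ is determined by its projections. Thus $V$ together with the $\pi_n$ is the product.

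For the transitive subcategory an additional step is needed, since the product $V$ above is in general not transitive (a product of $G$-orbits is rarely a single orbit). Apply Lemma~\ref{lem:transproper} to $V$: it yields a transitive proper representation $\phi_U\colon X\to U$ together with a morphism $\iota\colon U\to V$, and from the construction in its proof one may take $U$ to be the single $G$-orbit in $V$ on which the image of $\phi_V$ is supported, with $\iota$ the inclusion; by Proposition~\ref{closed orbits} this orbit is closed in $V$, so $\iota$ is in particular injective. I claim that $U$, together with the composites $\pi_n\circ\iota\colon U\to V_n$, is the product in the transitive subcategory. Given a transitive proper representation $W$ with morphisms $\gamma_n\colon W\to V_n$, the universal property just established gives $\gamma\colon W\to V$ with $\pi_n\circ\gamma=\gamma_n$; since $\phi_V=\gamma\circ\phi_W$ a.e.\ takes values in $U$, we have $\gamma(w_0)\in U$ for some $w_0\in W$, and then transitivity of $W$ together with $G$-invariance of $U$ forces $\gamma(W)\subseteq U$. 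Hence $\gamma$ factors through a continuous $G$-equivariant map $\gamma'\colon W\to U$ with $\gamma=\iota\circ\gamma'$; injectivity of $\iota$ upgrades the a.e.\ identity $\iota\circ\gamma'\circ\phi_W=\gamma\circ\phi_W=\phi_V=\iota\circ\phi_U$ to $\gamma'\circ\phi_W=\phi_U$ a.e., so $\gamma'$ is a morphism of transitive proper representations, and it is the unique one by Lemma~\ref{lem:transunique}.

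I expect the two points requiring genuine care to be the properness of the diagonal action — which, as above, collapses to a single coordinate — and, in the transitive case, the observation that any morphism from a transitive object into $V$ must land inside the distinguished orbit $U$; the remaining steps (the null-set bookkeeping and the use of injectivity of $\iota$) are routine.
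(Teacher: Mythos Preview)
Your proposal is correct and follows exactly the approach of the paper: the product in the full category is the diagonal map into the topological product, and the product in the transitive subcategory is obtained by applying Lemma~\ref{lem:transproper} to that product and verifying the universal property. The paper's own proof is extremely terse---it simply asserts that the diagonal map ``forms their product'' and that after invoking Lemma~\ref{lem:transproper} one should ``check that the latter is their product''---whereas you have carried out those checks in full (properness via a single coordinate, the null-set bookkeeping, and the factorisation through the supporting orbit using Proposition~\ref{closed orbits} and Lemma~\ref{lem:transunique}).
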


\begin{proof}
Given a countable set of proper representations $\phi_i:X\to V_i$
it is clear that the the diagonal map $\phi:X\to \prod V_i$ forms their product in the category of proper representations.
Assuming the $V_i$'s where transitive, we use Lemma~\ref{lem:transproper}
to find a morphism $\alpha:U\to \prod V_i$ for some transitive proper representation $\phi_U:X\to U$
and check that the latter is their product in the subcategory of transitive proper representations.
\end{proof}

\begin{lemma} \label{lem:cofinal}
Let $K$ be a second countable compact group.
Then every chain of transitive $K$-spaces has a countable cofinal subchain.
More precisely,
let $A$ be a totally ordered poset, for every $\alpha\in A$ let $V_\alpha$ be a transitive $K$-space and for $\alpha\lneq \beta$ let $\theta_{\beta,\alpha}:V_\beta\to V_\alpha$ be a $K$-equivariant map which is not an isomorphism such that for $\alpha<\beta<\gamma$, $\theta_{\gamma,\alpha}=\theta_{\beta,\alpha}\circ\theta_{\gamma,\beta}$,
then there exists a countable cofinal subset in $A$. 
\end{lemma}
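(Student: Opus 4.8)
The plan is to translate the statement into one about closed subgroups of $K$, and then to use that a second countable compact group is an inverse limit of a \emph{sequence} of compact Lie groups. Recall the standard dictionary: for the compact group $K$, every transitive $K$-space $W$ with a chosen point $w$ is $K$-equivariantly homeomorphic to $K/H$ with $H=\Stab_K(w)$ a closed subgroup; a $K$-equivariant map between transitive $K$-spaces is automatically surjective; and if it sends a point with stabilizer $H_1$ to a point with stabilizer $H_2$ (so $H_1\subseteq H_2$), then in the realization it is the natural projection $K/H_1\to K/H_2$, which is an isomorphism of $K$-spaces if and only if $H_1=H_2$. I will reduce the lemma to the assertion that \emph{$A$ contains no strictly increasing $\omega_1$-indexed sequence}. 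Granting this, one constructs $\alpha_\xi\in A$ by recursion on $\xi<\omega_1$: if $\{\alpha_\eta:\eta<\xi\}$ is cofinal in $A$, stop — it is then the desired countable cofinal subset; otherwise totality of the order yields some $\alpha_\xi$ strictly above all $\alpha_\eta$ with $\eta<\xi$. Were the recursion never to stop it would produce a strictly increasing $\omega_1$-sequence in $A$, so it stops at a countable stage and the lemma follows.

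So suppose, for contradiction, that $(\alpha_\xi)_{\xi<\omega_1}$ is strictly increasing in $A$. Put $W_\xi:=V_{\alpha_\xi}$ and $\psi_{\eta,\xi}:=\theta_{\alpha_\eta,\alpha_\xi}$ for $\xi<\eta$; these are surjective and satisfy the cocycle identity $\psi_{\gamma,\xi}=\psi_{\eta,\xi}\circ\psi_{\gamma,\eta}$. I choose basepoints $w_\xi\in W_\xi$ by transfinite recursion so that $\psi_{\eta,\xi}(w_\eta)=w_\xi$ for all $\xi<\eta$: at a successor stage use surjectivity of $\psi_{\xi+1,\xi}$; at a limit stage $\lambda$, note that $\{\psi_{\lambda,\xi}^{-1}(w_\xi):\xi<\lambda\}$ is a nested family of nonempty closed subsets of the compact space $W_\lambda$, hence has a common point. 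Setting $H_\xi:=\Stab_K(w_\xi)$, equivariance forces $H_\eta\subseteq H_\xi$ for $\xi<\eta$, and since no $\psi_{\eta,\xi}$ is an isomorphism we get $H_\eta\subsetneq H_\xi$. Thus $(H_\xi)_{\xi<\omega_1}$ is a strictly decreasing $\omega_1$-chain of closed subgroups of $K$; it remains to rule this out.

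Here second countability is used: write $K\cong\varprojlim_{i\in\bbN}K/N_i$ with $N_0\supseteq N_1\supseteq\cdots$ closed normal subgroups, $\bigcap_i N_i=\{1\}$, and each $K/N_i$ a compact Lie group (Peter--Weyl plus separability of $C(K)$). For any closed $H\le K$ one has $H=\bigcap_i HN_i$: each $HN_i$ is a closed subgroup, and a short compactness argument using $\bigcap_iN_i=\{1\}$ gives the nontrivial inclusion. Fix $i$. The assignment $\xi\mapsto H_\xi N_i/N_i$ is a weakly decreasing chain of closed subgroups of the compact Lie group $K/N_i$; since a compact Lie group admits no infinite strictly decreasing chain of closed subgroups (the dimension can drop only finitely often, and once the dimension, hence the identity component, is fixed one is looking at a decreasing chain inside a finite group), this chain is eventually constant, equal to its minimal value on a final segment $[\sigma_i,\omega_1)$ for some $\sigma_i<\omega_1$. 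Being a countable supremum of countable ordinals, $\sigma:=\sup_{i}\sigma_i<\omega_1$. Then $H_{\sigma+1}N_i=H_\sigma N_i$ for every $i$, whence $H_{\sigma+1}=\bigcap_i H_{\sigma+1}N_i=\bigcap_i H_\sigma N_i=H_\sigma$, contradicting $H_{\sigma+1}\subsetneq H_\sigma$.

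The crux is the third paragraph, and it is the only place where second countability is needed: approximating $K$ by a \emph{countable} tower of compact Lie quotients, together with the descending chain condition on closed subgroups of a compact Lie group. The recursive choice of coherent basepoints in the second paragraph is the one technical nuisance, and it is handled purely by compactness of the transitive $K$-spaces; the reduction in the first paragraph is elementary order theory.
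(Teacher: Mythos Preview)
Your proof is correct, but the paper takes a shorter and more elementary route. Both arguments pass from transitive $K$-spaces to a decreasing family of closed subgroups via stabilizers, but the paper avoids your reduction to $\omega_1$-chains and avoids Peter--Weyl and Lie theory entirely. Instead, the paper (after assuming without loss of generality that $A$ has no greatest element) forms the inverse limit $V_0$ of the whole diagram $(V_\alpha)_{\alpha\in A}$ in the category of compact $K$-spaces, picks a single point $v\in V_0$, and lets $K_\alpha$ be the stabilizer of its image in $V_\alpha$; this produces coherent basepoints for the entire chain in one stroke, replacing your transfinite recursion with compactness at limit stages. Second countability then enters only through the Lindel\"of property: the open cover $\{K\setminus K_\alpha\}_{\alpha\in A}$ of $K\setminus\bigcap_{\alpha} K_\alpha$ admits a countable subcover indexed by some $B\subset A$, so that $\bigcap_{\beta\in B}K_\beta=\bigcap_{\alpha\in A}K_\alpha$, and one checks directly that $B$ is cofinal (otherwise some $\alpha_0$ would dominate all of $B$, forcing $K_{\alpha_0}\subseteq\bigcap_\alpha K_\alpha$ and hence $K_\alpha=K_{\alpha_0}$ for every $\alpha>\alpha_0$, so each such $V_\alpha\to V_{\alpha_0}$ would be an isomorphism and $\alpha_0$ would be greatest). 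Your approach buys a structural explanation---the obstruction lives in the descending chain condition for closed subgroups of compact Lie quotients---at the cost of invoking the approximation theorem for compact groups; the paper's approach is slicker and uses nothing beyond point-set topology.
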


\begin{proof}
We assume as we may that $A$ has no greatest element.
We let $V_0$ be the inverse limit of the corresponding diagram in the category of compact $K$-spaces
and for each $\alpha\in A$ let $\theta_\alpha:V_0\to V_\alpha$ be the corresponding map.
We fix a base point $v\in V_0$ and for each $\alpha$ we denote the stabilizer of $\theta_\alpha(v)$ by $K_\alpha$.
As $K-\cap_A K_\alpha$ is second countable, its open cover $\{K-K_\alpha\}_{\alpha\in A}$ admits a countable subcover.
We obtain a countable subset $B$ in $A$ such that $\cap_A K_\alpha=\cap_B K_\beta$.
$B$ is cofinal in $A$, otherwise there exits $\alpha_0\in A$ such that for every $\beta\in B$, $\beta<\alpha_0$ and then $K_{\alpha_0} <\cap_B K_\beta=\cap_A K_\alpha$, but then for every $\alpha>\alpha_0$, $V_\alpha\to V_{\alpha_0}$ is an isomorphism and we conclude that $\alpha_0$ is a greatest element in $A$,
contradicting our assumption. 
\end{proof}

\begin{proof}[Proof of Theorem~\ref{thm:initial proper}]

In view of Lemma~\ref{lem:transproper} and Lemma~\ref{lem:transunique} it is enough to show that the subcategory of transitive proper representations has an
initial object.
By Lemma~\ref{lem:transunique} the set of isomorphism classes in this subcategory has a natural poset structure, one object is smaller than another if there exists a morphism from it to the other, and we need to show that this poset admits a least element.
By Lemma~\ref{lem:countableproducts} this poset is closed under taking countable minimums, 
thus it is enough to establish the existence of a minimal element
or, by Zorn's Lemma, a lower bound to every chain.
Having countable minimums, we are left to show that every chain has a countable cofinal subchain.
This is what we proceed to do.

Let $A$ be a linearly ordered poset and for every $\alpha\in A$ let $\phi_\alpha:X\to U_\alpha$ be a transitive proper representation of $X$
such that for $\alpha\lneq \beta$ there exists a morphism $U_\alpha\to U_\beta$ which is not an isomorphism.
Without loss of the generality we assume that $A$ has a greatest element $1$.
We fix a basepoint $u\in U_1$ and denote its stabilizer $K$.
Thus $K<G$ is a compact second countable subgroup.
For each $\alpha$ we denote by $\pi_\alpha:U_\alpha\to U_1$ the corresponding $G$-map and let $V_\alpha=\pi_\alpha^{-1}(\{u\})$.
The obvious chain of $K$-spaces thus obtained has a countable cofinal subchain by Lemma~\ref{lem:cofinal}
and we conclude that the chain $U_\alpha$ has a countable cofinal subchain as well.
\end{proof}

\begin{prop}[{cf. \cite[Theorem~4.7]{BF-products}}] \label{prop:yonedacomp}
Let $\phi:X\to G/K$ be an initial object in the category of proper representations of the $T$-space $X$.
Assume $T'$ is a locally compact group acting on $X$ commuting with $T$.
Then $\phi$ is also $T'$-equivariant with respect to a continuous homomorphism $\theta':T'\to N_{G}(K)/K$,
where $N_{G}(K)/K$ acts on $G/K$ on the right as usual.
\end{prop}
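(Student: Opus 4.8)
The plan is to run a Yoneda-type argument: use the universal property of the initial object to turn each element of $T'$ into a $G$-equivariant self-homeomorphism of $G/K$, identify the group of such homeomorphisms with $N_{G}(K)/K$, and then check that the resulting homomorphism is continuous. First, fix $t'\in T'$. Since the $T'$-action on $X$ preserves the measure class, $\phi\circ t'^{-1}\colon X\to G/K$ is again an a.e.\ defined measurable map, and it is $T$-equivariant: as $T'$ commutes with $T$, for $t\in T$ one has $(\phi\circ t'^{-1})(tx)=\phi(t\,t'^{-1}x)=\theta(t)\,(\phi\circ t'^{-1})(x)$ for a.e.\ $x$. Hence $(G/K,\phi\circ t'^{-1})$ is again a proper representation of the $T$-space $X$, and by initiality of $(G/K,\phi)$ there is a morphism $\beta_{t'}\colon(G/K,\phi)\to(G/K,\phi\circ t'^{-1})$, i.e.\ a continuous $G$-map $\beta_{t'}\colon G/K\to G/K$ with $\beta_{t'}\circ\phi=\phi\circ t'^{-1}$ a.e., unique by Lemma~\ref{lem:transunique}. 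Substituting $t'^{-1}x$ for $x$ in the defining relation for $\beta_{(t')^{-1}}$ shows that $\beta_{(t')^{-1}}$ is also a morphism $(G/K,\phi\circ t'^{-1})\to(G/K,\phi)$; since the only endomorphism of the initial object is the identity, $\beta_{(t')^{-1}}\circ\beta_{t'}$ and $\beta_{t'}\circ\beta_{(t')^{-1}}$ are both $\id_{G/K}$, so each $\beta_{t'}$ is a $G$-equivariant homeomorphism of $G/K$.

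Next I identify the group of $G$-equivariant homeomorphisms of $G/K$ with $N_{G}(K)/K$ acting by right translations: such a homeomorphism $\alpha$ is determined by $\alpha(eK)=nK$, and comparing stabilizers for $\alpha$ and for $\alpha^{-1}$ forces $nKn^{-1}=K$, so $n\in N_{G}(K)$; conversely every $nK\in N_{G}(K)/K$ gives such a homeomorphism $gK\mapsto gnK$. Let $\theta'(t')\in N_{G}(K)/K$ be the element with $\beta_{t'}=(\,\cdot\,\theta'(t'))$, so that $\phi(t'^{-1}x)=\phi(x)\cdot\theta'(t')$ for a.e.\ $x$; this is the asserted equivariance of $\phi$ for $\theta'$. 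The right $N_{G}(K)/K$-action on $G/K$ is free and $\phi$ is non-null, so $\theta'(t')$ is the unique element satisfying this relation; feeding (using measure-class invariance to substitute $s'^{-1}x$ for $x$) the relations for $s'$, $t'$, and $s't'$ into one another and evaluating at a single point where $\phi$ is defined gives $\theta'(s't')=\theta'(s')\theta'(t')$. Thus $\theta'\colon T'\to N_{G}(K)/K$ is a homomorphism, characterised by $\phi\circ t'^{-1}=\phi\cdot\theta'(t')$.

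It remains to prove continuity, which is the real point. Equip $L^{0}(X,G/K)$ with the (Polish) topology of convergence in measure. The orbit map $r\colon N_{G}(K)/K\to L^{0}(X,G/K)$, $n\mapsto\phi\cdot n$, is continuous by bounded convergence (using continuity of the right action $G/K\times N_{G}(K)/K\to G/K$) and injective (evaluate at a point where $\phi$ is defined). The map $F\colon T'\to L^{0}(X,G/K)$, $t'\mapsto\phi\circ t'^{-1}$, is continuous by the standard continuity of a measure-class-preserving action on $L^{0}$, and $F=r\circ\theta'$ by the defining relation above. Hence if $t'_{j}\to t'$ in $T'$ and $\theta'(t'_{j})\to m$ in $N_{G}(K)/K$, then $r(m)=\lim_{j}r(\theta'(t'_{j}))=\lim_{j}F(t'_{j})=F(t')=r(\theta'(t'))$, so $m=\theta'(t')$ by injectivity of $r$; thus the graph of $\theta'$ is closed, hence Borel, and a Borel homomorphism between Polish groups is continuous. (Here $N_{G}(K)$ is closed in $G$ because $K$ is compact, so $N_{G}(K)/K$ is a Polish group, and $T'$ is Polish.) I expect this last step to be the main obstacle: $\theta'$ is manufactured by an \emph{a priori} purely measurable recipe, namely solving $\phi\circ t'^{-1}=\phi\cdot(-)$, and the work is in threading it through the topology of $L^{0}$ and invoking automatic continuity; by contrast the bookkeeping in the first two paragraphs is routine once one has Lemma~\ref{lem:transunique} and the initial object.
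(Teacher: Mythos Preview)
Your proof is correct and follows essentially the same strategy as the paper: define $\theta'(t')$ via the universal property of the initial object, identify the $G$-automorphisms of $G/K$ with $N_G(K)/K$, verify the homomorphism property by uniqueness, and establish continuity by passing through $L^0(X,G/K)$ and invoking automatic continuity for Polish groups. The only minor difference is in the last step: the paper shows the $N/K$-orbit of $\phi$ in $L^0$ is closed (hence the orbit map is a homeomorphism onto its image) and that $t'\mapsto\phi\circ t'$ is \emph{measurable}, concluding $\theta'$ is measurable and citing \cite{Rosendal}; you instead use \emph{continuity} of $t'\mapsto\phi\circ t'^{-1}$ together with mere injectivity of the orbit map to get a closed graph, which is an equally valid and slightly slicker route.
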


\begin{proof}
For a given $t'\in T'$ we consider the diagram
\begin{equation} \label{a-diag-AG}
\xymatrix{ X \ar[r]^{\phi} \ar[d]^{t'} & G/K \ar@{.>}[d]^{\theta'(t')}  \\
		 X \ar[r]^{\phi}   & G/K  }
\end{equation}
where we denote by $\theta'(t')$ the dashed arrow, which is the $G$-equivariant map $G/K\to G/K$ given by the fact that $\phi$ forms an initial object and
$\phi\circ t'$ is a proper representation of $X$.
By the uniqueness of the dashed arrow, the correspondence $t'\mapsto \theta'(t')$ is easily checked to form a homomorphism from $T'$ to the group of 
$G$-automorphisms of $G/K$ which we identify with $N/K$, where $N=N_{G}(K)$.
We are left to check the continuity of $\theta'$.

We let 
$U=L^0(X,G/K)$ and consider it with the Polish topology given by convergence in measure.
We endow $U$ with the action of $N/K$ by post-composition, and the action of $T'$  by precomposition.
By the fact that $N/K$ acts freely on $G/K$, we get that $N/K$ acts freely on $U$ as well.
Moreover, the $N/K$ orbits in $G/K$ are closed (as these are the fibers of $G/K\to G/N$),
thus also the $N/K$ orbits in $U$ are closed, as convergence in measure of a sequence implies a.e convergence of a subsequence.
It follows that the $N/K$-orbit map $N/K\to U$, $n\mapsto n\circ \phi$ is a homeomorphism onto its image, $N/K\cdot\phi$.
We let $\alpha:N/K\cdot\phi\to N/K$ be its inverse.

By the fact that the map $T'\times X \to G/K$, $(t',x)\mapsto \phi(t'x)$ is a.e defined and measurable, we get that the associated map $\beta:T'\to U$, $t'\mapsto \phi\circ t'$ is a.e defined and measurable, see \cite[Chapter VII, Lemma 1.3]{margulis-book}.
Since by the definition of $\theta'$, $\phi\circ t'=\theta'(t')\circ \phi$, we conclude that $\theta'$ agrees a.e with $\alpha\circ \beta$ which is a.e defined and measurable.
It follows that $\theta'$ is measurable.
By \cite[Lemma 2.1]{Rosendal} we conclude that $\rho'$ is a continuous homomorphism.
\end{proof}

\section{Only trivial proper representations}

In the previous section we fixed, apart of an lcsc group $T$ acting on a Lebesgue space $X$, 
also a Polish group $G$ and a continuous homomorphism $\theta:T\to G$, and studied the associated proper representations of $X$.
In this section we will vary $G$ and $\theta$.

\begin{defn} \label{def:trivprop}
Let $T$ be an lcsc group and $X$ a $T$-Lebesgue space.
We will say that the $T$-space $X$ has only trivial proper representations if 
for every Polish group $G$ and a continuous homomorphism $\theta:T\to G$,
for every proper $G$ space $V$ and associated proper representation $\phi:X\to V$,
$\phi$ is essentially constant.
\end{defn}

Our goal in this section is to prove the following theorem.

\begin{theorem} \label{thm:noproper}
Let $S$ be a locally compact second countable group and $\Gamma<S$ a closed subgroup.
Assume that there exists a topologically generating (finite or infinite) sequence of abelian subgroup $T_0,T_1,\ldots <S$ such that for every $i>0$,
$T_i$ commutes with $T_{i-1}$ and for every $i\geq 0$, $T_i$ acts ergodically on $S/\Gamma$.
Assume also that $S/N$ has trivial abelianization, where $N$ is the closed normal subgroup generated by $\Gamma$.
Then for every $i\geq 0$, the $T_i$-space $S/\Gamma$ has only trivial proper representations.
\end{theorem}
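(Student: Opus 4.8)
The plan is to exploit the sequentially commuting structure of the $T_i$'s together with Proposition~\ref{prop:yonedacomp} to show that any initial proper representation of a $T_i$-space $S/\Gamma$ is in fact equivariant for the ambient group $S$, and then use the two global hypotheses (that $\Gamma$ acts "trivially enough" and that $S/N$ has trivial abelianization) to force this representation to be a point. First I would argue by contradiction: suppose for some $i$ the $T_i$-space $X = S/\Gamma$ has a nonconstant proper representation; by Theorem~\ref{thm:initial proper} there is then an initial, transitive one $\phi\colon X\to G/K$ with $G/K$ a nontrivial proper $G$-space (so $K$ is compact and $K\neq G$, using that $\delta(\Gamma)$ or rather $\theta(T_i)$ is unbounded — more carefully, nonconstancy means $G/K$ is not a point). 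The target group $G$ here is an arbitrary Polish group with a continuous $\theta\colon T_i\to G$; we get to vary it.

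Next I would run a "propagation along the chain" argument. Since $X$ is a homogeneous $S$-space, every $s\in S$ gives an automorphism of the Lebesgue space $X$, and for $s$ commuting with $T_i$, precomposition $\phi\circ s$ is again a $T_i$-proper representation, so by Proposition~\ref{prop:yonedacomp} the map $\phi$ is equivariant under $Z_S(T_i)$ via a continuous homomorphism into $N_G(K)/K$. In particular $T_{i-1}$ and $T_{i+1}$ (which commute with $T_i$) act through $N_G(K)/K$. But $N_G(K)/K$ acts \emph{freely} on $G/K$; combined with the fact that $\phi$ is also $T_{i\pm1}$-\emph{equivariant in the original sense}, and $T_{i\pm1}$ acts ergodically on $X$, I want to conclude that $\phi$ is the initial proper representation for the $T_{i-1}$- and $T_{i+1}$-spaces as well (or at least that it dominates/is dominated appropriately). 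Iterating up and down the chain — using at each stage that $T_j$ commutes with $T_{j-1}$ and acts ergodically — I would show $\phi$ is equivariant under the group topologically generated by all the $T_j$, namely $S$ itself, with $S$ acting through a continuous homomorphism $\rho\colon S\to G$ with $\rho|_{T_i}=\theta|_{T_i}$ and such that $\rho(S)$ normalizes $K$... actually since $\phi\colon X=S/\Gamma\to G/K$ becomes an $S$-map, $\rho(\Gamma)\subset K$, so $\rho(\Gamma)$ is precompact.

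The final step uses the two hypotheses about $N$ and the abelianization. Having produced a continuous $\rho\colon S\to G$ with $\rho(\Gamma)$ contained in a compact group, the closure $\overline{\rho(N)}$ is the closed normal subgroup generated by a precompact set's conjugates; I would argue that $\rho(N)$ is relatively compact (normality plus $\rho(\Gamma)$ bounded, via the proper action: $K$-conjugates of $\Gamma$ stay bounded), hence $\overline{\rho(N)}$ is a compact normal subgroup of $\overline{\rho(S)}$. Then $\overline{\rho(S)}/\overline{\rho(N)}$ is a quotient of $S/N$, so it has trivial abelianization; but it is also abelian, being topologically generated by the images of the commuting-in-sequence abelian groups $T_i$ — here I must be slightly careful, the $T_i$ only commute consecutively, but their images all centralize a common $\rho(T_i)$ which... hmm, this is where I would instead note that $\overline{\rho(S)}/\overline{\rho(N)}$ is topologically generated by abelian subgroups $\overline{\rho(T_i)}$ with consecutive ones commuting, and a group with trivial abelianization that is generated this way, modulo the compact part, must be trivial — so $\rho(S)$ is compact. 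A compact image acting on the proper $G$-space $G/K$ forces (by properness, Proposition~\ref{closed orbits}) the orbit $\rho(S)\cdot\phi$-business to be bounded, and chasing back, $\phi$ must be essentially constant, contradicting nonconstancy.

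\textbf{Main obstacle.} The delicate point is the propagation step: making rigorous that $T_i$-equivariance plus $Z_S(T_i)$-equivariance-through-$N_G(K)/K$ plus ergodicity of $T_{i\pm1}$ upgrades $\phi$ to a genuine $T_{i\pm1}$-proper representation with the \emph{same} compact $K$ (not a larger stabilizer), and that the induced homomorphisms are compatible so that they glue to a single $\rho$ on all of $S$. The freeness of the $N_G(K)/K$-action on $G/K$ and the uniqueness in Lemma~\ref{lem:transunique} are what should make this work, but organizing the induction along a possibly infinite chain, and handling the closure/topological-generation carefully at the end, is the technical heart of the argument.
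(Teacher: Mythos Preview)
Your overall strategy --- take an initial transitive proper representation, propagate equivariance along the commuting chain via Proposition~\ref{prop:yonedacomp}, glue to a homomorphism on all of $S$, then kill it using the hypotheses on $N$ and the abelianization --- is exactly the paper's approach. But you are missing the one reduction that makes the argument go through cleanly, and without it both your ``main obstacle'' and your final step are genuine gaps.

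The missing move is this: at the very start, replace $G$ by the closure of $\theta(T_0)$. Since $T_0$ is abelian, $G$ becomes abelian. The initial object $\phi:X\to G/K$ is transitive and, after modding out the kernel of the action, faithful; for an abelian $G$ this forces $K=\{e\}$ and $V=G$. Now $N_G(K)/K=G$, so Proposition~\ref{prop:yonedacomp} gives a continuous $\theta_1:T_1\to G$ with $\phi$ being $T_1$-equivariant into the \emph{same} target $G$. To see that $\phi$ is again initial for $T_1$: any initial $\phi_1:X\to V_1$ admits a $G$-map $V_1\to G$, faithfulness forces $V_1$ to be simply transitive, so this map is an isomorphism. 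The induction is now painless --- no stabilizer $K$ ever grows, because it was trivial from the outset. This dissolves precisely what you flagged as the technical heart.

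With $G$ abelian the endgame is also immediate and avoids the step where your sketch wavers. Viewing $\phi$ as a left $\Gamma$-invariant, right $T_i$-equivariant map $S\to G$ and applying Lemma~\ref{lem:extension} with $\delta:\Gamma\to G$ trivial, one gets a continuous $\theta:S\to G$ with $\theta|_{T_i}=\theta_i$ and $\Gamma\subset\ker\theta$, hence $N\subset\ker\theta$. Since $G$ is abelian, $\theta$ factors through the abelianization of $S/N$, which is trivial by hypothesis; so $\theta$ is trivial, $\theta_0$ is trivial, and since $\theta_0(T_0)$ was dense in $G$, the group $G$ is trivial and $\phi$ is constant. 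By contrast, your attempted argument that $\overline{\rho(S)}/\overline{\rho(N)}$ is abelian because the $T_i$ commute \emph{consecutively} is not valid (sequential commutation does not force the generated group to be abelian), and your claim that $\rho(N)$ is precompact is unjustified (conjugates of a compact set by a possibly unbounded $\rho(S)$ need not stay bounded). The abelian reduction on $G$ sidesteps both issues entirely.
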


Before proving the theorem we will isolate out a major step in the proof that will be also used elsewhere.

\begin{lemma} \label{lem:extension}
Let $S$ be a locally compact second countable group and $\Gamma<S$ a closed subgroup.
Let $G$ be a Polish group and $\delta:\Gamma\to G$ a continuous homomoprhism.
Assume given a topologically generating (finite or infinite) sequence of subgroup $T_0,T_1,\ldots <S$,
for every $i\geq 0$ a continuous homomorphism $\theta_i:T_i\to G$
and an a.e defined measurable map $\phi:S\to G$ which is left $\Gamma$ equivariant and right $T_i$-equivariant for each $i$.
That is, for every $i$, for Haar a.e $s\in S$, for every $\gamma\in \Gamma$ and every $t\in T_i$ we have
\[ \phi(\gamma s t)=\delta(\gamma)\phi(s)\theta_i(t). \]
Then there exists a continuous homomorphism $\theta:S\to G$ such that for every $i$, $\theta_i=\theta|_{T_i}$ and under which $\phi$ is right $S$-equivariant.
Furthermore, there exists $g\in G$ such that $\delta=\inn(g)\circ \theta|_\Gamma$, where $\inn(g):G\to G$ denotes the map $x\mapsto gxg^{-1}$.
In particular, $\delta$ extends to $S$.
\end{lemma}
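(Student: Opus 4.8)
The plan is to bootstrap the hypothesis ``$\phi$ is right $T_i$-equivariant for each $i$'' into full right $S$-equivariance, and then read off the extension of $\delta$. First I would observe that the map $\phi$, viewed as an element of $L^0(S,G)$ with $G$ acting by right translation and $S$ acting on the source by right translation, has a well-defined right-translation action: for $s_0\in S$ the map $s\mapsto \phi(ss_0)$ is again left $\Gamma$-equivariant and measurable. The cocycle-type identity $\phi(\gamma s t)=\delta(\gamma)\phi(s)\theta_i(t)$ says precisely that, for $t\in T_i$, right translation of $\phi$ by $t$ equals post-composition of $\phi$ by the right translation $R_{\theta_i(t)}$ on $G$. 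The key point is that $G$ acts \emph{freely} on itself by right translations and the orbits (all of $G$) are trivially closed, so the argument of Proposition~\ref{prop:yonedacomp} applies: the orbit map $G\to L^0(S,G)$, $g\mapsto \phi\cdot R_g$ (i.e. $s\mapsto \phi(s)g$) is a homeomorphism onto its image $G\cdot\phi$, because convergence in measure implies a.e.\ convergence along a subsequence and $\phi$ is a.e.\ nonconstant enough — more precisely one needs that $s\mapsto\phi(s)g$ determines $g$, which is immediate from freeness of right translation. Actually, since the relevant ``proper $G$-space'' here is $G$ itself with the right translation action (which is proper — stabilizers trivial, and the map $(g,v)\mapsto(v,vg)$ is a homeomorphism of $G\times G$), this is literally the setting of \S\ref{sec:properreps}, and $\phi:S\to G$ is a proper representation of the relevant space after quotienting by $\Gamma$.

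Next I would define $\theta:S\to G$ on the generators: for $s_0$ in some $T_i$ we already have $\theta(s_0)=\theta_i(s_0)$, and the relation ``right translation of $\phi$ by $s_0$ $=$ post-composition with $R_{\theta(s_0)}$'' picks out a unique such $\theta(s_0)\in G$ for \emph{any} $s_0\in S$, provided we can show right translation by $s_0$ maps $G\cdot\phi$ into itself. To see that, note the set of $s_0\in S$ for which $\phi\cdot R_{s_0}$ lies in the closed (by the orbit-map homeomorphism above) subset $G\cdot\phi$ of $L^0(S,G)$ is a closed subgroup of $S$ — closed because $s_0\mapsto \phi\cdot R_{s_0}$ is a continuous (indeed measurable, hence continuous by the same Rosendal-type argument) map $S\to L^0(S,G)$ — and it contains every $T_i$, hence contains the subgroup topologically generated by the $T_i$, which is $S$. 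This gives a map $\theta:S\to G$, $\theta(s_0)=\alpha(\phi\cdot R_{s_0})$ where $\alpha$ inverts the orbit map; it is a homomorphism because $R_{s_0 s_1}=R_{s_1}\circ R_{s_0}$ corresponds to $R_{\theta(s_0)}$ then $R_{\theta(s_1)}$ and right translations by distinct elements of $G$ differ; it restricts to $\theta_i$ on $T_i$ by construction; and it is continuous by the measurability-implies-continuity lemma \cite[Lemma 2.1]{Rosendal} (or \cite[Lemma 2.5]{effros}), since $\theta=\alpha\circ\beta$ with $\beta:s_0\mapsto\phi\cdot R_{s_0}$ measurable (as in the proof of Proposition~\ref{prop:yonedacomp}, using \cite[Chapter VII, Lemma 1.3]{margulis-book}) and $\alpha$ continuous on $G\cdot\phi$. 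By construction $\phi(s s_0)=\phi(s)\theta(s_0)$ for a.e.\ $s$, for every $s_0$, i.e.\ $\phi$ is right $S$-equivariant.

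For the final clause, I would use right $S$-equivariance to simplify $\phi$: for a.e.\ fixed $s_1\in S$ and all $s_0$, $\phi(s_1 s_0)=\phi(s_1)\theta(s_0)$, so choosing one good $s_1$ and setting $g=\phi(s_1)\theta(s_1)^{-1}$ gives $\phi(s)=g\,\theta(s)$ for a.e.\ $s$. Feeding this into the left $\Gamma$-equivariance $\phi(\gamma s)=\delta(\gamma)\phi(s)$ yields $g\,\theta(\gamma s)=\delta(\gamma)\,g\,\theta(s)$, and since $\theta(\gamma s)=\theta(\gamma)\theta(s)$ we can cancel $\theta(s)$ on the right to get $g\,\theta(\gamma)=\delta(\gamma)\,g$, i.e.\ $\delta(\gamma)=g\,\theta(\gamma)\,g^{-1}=\inn(g)\circ\theta(\gamma)$ for all $\gamma\in\Gamma$. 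Then $\inn(g^{-1})\circ\theta:S\to G$ is a continuous homomorphism restricting to $\delta$ on $\Gamma$, so $\delta$ extends to $S$, as claimed.

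The main obstacle is the first paragraph: establishing that the right-translation orbit $G\cdot\phi$ is \emph{closed} in $L^0(S,G)$ and that the orbit map is a homeomorphism onto it, so that the inverse $\alpha$ exists and is continuous. This is where the properness of the right-translation action of $G$ on itself (equivalently, the discreteness/freeness used to separate the ``one $G$-orbit'' carrying the image of $\phi$) is essential, and it is exactly the mechanism already exploited in Proposition~\ref{prop:yonedacomp}; the rest is bookkeeping with cocycle identities and the measurability-to-continuity upgrade. One should double-check that Assumption~\ref{assum}-type nonemptiness is not secretly needed — here it is automatic because $\phi$ itself is a proper representation of the relevant $T_i$-space — and that the a.e.\ qualifiers can be handled uniformly in $s_0$ by a Fubini argument, which is routine.
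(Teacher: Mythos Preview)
Your argument is correct and follows the same route as the paper. The paper's own proof black-boxes the construction of $\theta$ by citing \cite[Lemma~5.1]{BF-MSR}, and what you have written is essentially a reconstruction of that lemma via the $L^0(S,G)$ orbit mechanism of Proposition~\ref{prop:yonedacomp}; the second half --- setting $g=\phi(s)\theta(s)^{-1}$, observing this is right $S$-invariant hence essentially constant, and reading off $\delta=\inn(g)\circ\theta|_\Gamma$ --- is literally the same computation the paper carries out.
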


\begin{proof}
Using \cite[Lemma~5.1]{BF-MSR} with $X=S$ we get a continuous homomorphism $\theta:S\to G$ such that for every $i$, $\theta_i=\theta|_{T_i}$
and $\phi:S\to G$ is right $S$-equivariant with respect to $\theta:S\to G$.
It follows that the a.e defined measurable map $\Phi:S\to G$, $\Phi(s)=\phi(s)\theta(s)^{-1}$ is right $S$-invariant, thus essentially constant.
Denoting the essential value of $\Phi$ by $g$, we get that $\phi(s)=g\theta(s)$ a.e.
As $\phi$ is left $\Gamma$-equivariant, we get $g\theta(\gamma s)=\delta(\gamma)g\theta(s)$ for a.e $s$.
Fixing $\gamma$, this equation of continuous functions in the parameter $s$ is satisfied a.e on $S$, hence everywhere.
Taking $s=e$ we get that indeed $\delta=\inn(g)\circ \theta|_\Gamma$.
\end{proof}

\begin{proof}[Proof of Theorem~\ref{thm:noproper}]
We will prove the theorem for $i=0$. The general case follows by reordering the groups $T_i$.
We denote $X=S/\Gamma$.
We fix a Polish group $G$, a homomorphism $\theta_0:T_0\to G$,
a proper $G$ space $V$ and an associated proper representation $\phi:X\to V$.
We argue to show that $\phi$ is essentially constant.
Replacing $G$ with the closure of $\rho(T_0)$, we assume as we may that $\rho(T_0)$ is dense in $G$.
In particular, $G$ is abelian.
We also assume as we may that $\phi$ is an initial object in the category of proper representations of the $T_0$ -space $S/\Gamma$.
In particular, $V$ is $G$ transitive. 
Moding up the kernel of the action of $G$ on $V$, we also assume as we may that this action is faithful.
We thus may identify $V$ with $G$ itself, endowed with the left regular action.

Setting $T'=T_1$ in Proposition~\ref{prop:yonedacomp} and noting that $K=\{e\}$ and $N_G(K)=G$,
we get that $\phi$ is $T'$-equivariant with respect to a continuous homomorphism $\theta_1:T_1\to G$.
We thus may regard $\phi:X\to G$ as a proper representation of $X$ as a $T_1$-space.
In particular, Assumption~\ref{assum} is satisfied for the category of proper representations of $X$ as a $T_1$-space,
so by Theorem~\ref{thm:initial proper} this category has an initial object which is transitive.
We now claim that $\phi$ is actually an initial object also in the category of proper representations of $X$ as a $T_1$-space.
Let $\phi_1:X\to V_1$ be such an initial object.
As $\phi:X\to G$ is another object in the category, there exists a $G$-map $\alpha:V_1\to G$ representing a morphism from $\phi_1$ to $\phi$.
We conclude that the action of $G$ on $V_1$ is faithful, thus $V_1$ is a simply transitive $G$-space.
It follows that the map $\alpha$ is actually an isomorphism, thus indeed $\phi:X\to G$ is an initial object, 
being isomorphic to an initial object.
Proceeding by induction in an obvious way, we get for every $i\geq 0$ a continuous homomorphism $\theta_i:T_i\to G$
with respect to which the map $\phi$ is $T_i$-equivariant.

We view $\phi$ as a left $\Gamma$-invariant function $S\to G$ which is right $T_i$-equivariant via $\theta_i$ for each $i$.
Setting $\delta:\Gamma\to G$ to be the trivial homomorphism,
we get by Lemma~\ref{lem:extension} that 
there exists a continuous homomorphism $\theta:S\to G$ such that for every $i$, $\theta_i=\theta|_{T_i}$ and $g\in G$ such that $\delta=\inn(g)\circ \theta|_\Gamma$.
From the last condition we get that $\Gamma$ is in the kernel of $\theta$, hence so is also the closed normal subgroup generated by $\Gamma$, $N$.
As $S/N$ has trivial abelianization, 
we conclude that $\theta$ is trivial.
In particular we get that $\theta_0=\theta|_{T_0}$ is trivial,
which concludes the proof, as $\theta_0(T_0)$ is dense in $G$ and $V=G$.
\end{proof}

\section{$T$-proper representations of $S$}

Throughout this section we fix a locally compact second countable group $S$ and a closed subgroup $\Gamma<S$.
We endow $S$ with its Haar measure and regard it as a Lebesgue space.
We also fix 
a Polish group $G$ and a continuous homomorphism $\delta:\Gamma\to G$.

\begin{defn}
Given all the data above,
for a closed subgroup $T<S$ a $T$-proper representation of $S$
consists of the following data:
\begin{itemize}
\item a Polish space $V$ endowed with a proper action of $G$.
\item a Polis group $L$ acting faithfully on $V$ commuting with the $G$-action.
This action will be considered as a right action, while the $G$-action on $V$ will be considered on the left.
\item a continuous homomorphism with dense image $\theta:T\to L$.
\item an associated representation of the $\Gamma\times T$-space $S$ on $V$,
where $\Gamma$ acts on the left, $T$ acts on the right of the Lebesgue space $S$.
That is, a Haar a.e defined measurable map $\phi:S \to V$ such that for almost every $s\in S$, every $\gamma\in \Gamma$ and every $t\in T$,
\[ \phi(\gamma s t)=\delta(\gamma) \phi(s)d(t). \]
\end{itemize}
We abbreviate the notation by saying that $V$ is a $T$-proper representation of $S$,
denoting the extra data by $L_V,\theta_V$ and $\phi_V$.
Given another $T$-proper representation $U$ ,
a morphism of $T$-proper representations of $S$ from the $T$-proper representation $U$ to the $T$-proper representation $V$ is
a continuous map $\alpha:U\to V$ which is $G \times T$-equivariant,
and such that $\phi_V$ agrees a.e with $\alpha\circ \phi_U$.
\end{defn}

We say that $V$ is a transitive $T$-proper representation if the underlying action of $G$ on $V$ is transitive.
In that case we may identify $V$ with $G/K$ for some compact subgroup $K<G$ and $L$ with a closed subgroup of $N_G(K)/K$ which acts on $G/K$ as usual,
on the right. Note that the action of $N_G(K)/K$ on $G/K$ is proper.

Similarly to making Assumption~\ref{assum} in \S\ref{sec:properreps} we make the following assumption.

\begin{assumption} \label{Tassum}
We will assume through out this section that the category of $T$-proper representations of $S$ is not empty.
\end{assumption}

\begin{theorem} \label{thm:Tinitial proper}
Let $T<S$ be a closed abelian subgroup such that the $T$-space $S/\Gamma$ has only trivial proper representations.
Then the category of $T$-proper representations of $S$ has an initial object
and this initial object is a transitive. 
\end{theorem}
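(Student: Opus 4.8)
The plan is to transcribe the proof of Theorem~\ref{thm:initial proper} to the present relative setting, the one new ingredient being the hypothesis on $S/\Gamma$. As a preliminary I would record the exact analogue of Lemma~\ref{lem:transunique}: if $U$ is a transitive $T$-proper representation of $S$ and $V$ is any $T$-proper representation, then a morphism $U\to V$, if it exists, is unique, since the equalizer of two such morphisms is a $G$-invariant subset of $U$ containing the (essentially non-empty) essential image of $\phi_U$, hence is all of $U$. In particular the isomorphism classes of transitive $T$-proper representations form a poset, ordered by existence of a morphism, and it suffices to produce a least element of this poset and to check that it is initial in the whole category.

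The heart of the matter, and the step I expect to be the real obstacle, is the analogue of Lemma~\ref{lem:transproper}: \emph{every $T$-proper representation $V$ of $S$ receives a morphism from a transitive one}. Given $V$, the right $L$-action commutes with the $G$-action and therefore descends to the Hausdorff orbit space $V/G$ (Proposition~\ref{closed orbits}); the composition of $\phi\colon S\to V$ with the orbit projection $V\to V/G$ is left $\Gamma$-invariant, because $\delta(\Gamma)\subseteq G$, and right $T$-equivariant, so it descends to a measurable map $\bar\phi\colon S/\Gamma\to V/G$ that is $T$-equivariant with respect to $\theta\colon T\to L$. I would then argue that $\bar\phi$ is an instance of a proper representation of the $T$-space $S/\Gamma$, so that Definition~\ref{def:trivprop} forces $\bar\phi$ to be essentially constant. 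The delicate point is that $V/G$ need not itself be a proper $L$-space; to get around this I would restrict to the closed support $Y\subseteq V/G$ of the pushforward measure --- an $L$-invariant set carrying an $L$-quasi-invariant ergodic measure of full support --- and realize the corestricted map $S/\Gamma\to Y$ as a bona fide proper representation (after one further reduction of the acting group), which is the part of the argument that will need the most care, since it is exactly where the rigidity contained in ``only trivial proper representations'' must be leveraged.

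Granting this, $\bar\phi$ is essentially constant with value the class of some orbit $Gv$, so the essential image of $\phi$ lies in $U\defq Gv$, which is closed and $G$-equivariantly homeomorphic to $G/G_v$ with $G_v$ compact (Proposition~\ref{closed orbits}). The $T$-equivariance of $\bar\phi$ together with the density of $\theta(T)$ in $L$ shows that $L$ stabilizes the class of $U$, i.e.\ preserves $U$; equipping $U$ with the restricted (still proper) $G$-action, with the quotient of $L$ by the kernel of its action on $U$, and with $\phi$ itself as the associated map, makes $U$ a transitive $T$-proper representation and the inclusion $U\hookrightarrow V$ a morphism. Combined with the uniqueness statement, this reduces the theorem to exhibiting a least element of the poset of transitive $T$-proper representations, and, via Assumption~\ref{Tassum}, shows that this poset is non-empty.

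The two remaining closure properties go through as in \S\ref{sec:properreps}. Countable products exist: for a countable family $(V_i)$ the diagonal $G$-action on $\prod_i V_i$ is again proper (a product of proper actions is proper), so $\prod_i V_i$, with the diagonal map into it and with $L$ the closure of the image of $T$ in $\prod_i L_i$, is a $T$-proper representation; transitivizing it by the previous step and using uniqueness produces the product in the transitive subcategory, so the poset is closed under countable infima. And every chain has a countable cofinal subchain: as in the proof of Theorem~\ref{thm:initial proper} we may assume the chain has a greatest element $U_1=G/K$ with $K$ compact, the underlying $G$-maps $U_\alpha\to U_1$ have compact fibres, these fibres form a chain of transitive $K$-spaces to which Lemma~\ref{lem:cofinal} applies, and by uniqueness the extra data $(L_\alpha,\theta_\alpha,\phi_\alpha)$ is carried along the morphisms, so a countable cofinal subchain of the fibres yields a countable cofinal subchain of $T$-proper representations. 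Hence every chain has a lower bound, namely the infimum of such a countable cofinal subchain; by Zorn's lemma the poset has a minimal element, which --- being below its infimum with any other element --- is the least element, hence an initial object of the category of $T$-proper representations, and it is transitive by construction. Everything except the properness supplied in the transitivization step is a direct transcription of \S\ref{sec:properreps}, with the extra left-$\Gamma$, right-$T$ bookkeeping.
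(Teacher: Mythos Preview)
Your overall architecture is exactly that of the paper: reduce to the transitive subcategory via the analogues of Lemmas~\ref{lem:transunique}, \ref{lem:transproper}, \ref{lem:countableproducts}, then run the Zorn argument with Lemma~\ref{lem:cofinal}. The uniqueness, countable-product, and chain steps are fine and match the paper's Lemmas~\ref{Tlem:transunique} and \ref{lem:Tcountableproducts} essentially verbatim.

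The gap you flag in the transitivization step is real, and your proposed workaround does not close it. Restricting to the support $Y\subset V/G$ gives you an $L$-invariant closed set with an ergodic quasi-invariant measure, but there is no reason for the $L$-action on $Y$ to be proper, and ``one further reduction of the acting group'' does not produce one: quotients of $L$ will still act non-properly on $Y$ in general. Notice also that your argument never uses that $T$ is \emph{abelian}, which is a hypothesis of the theorem --- this is a strong signal that an idea is missing.

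The paper's Lemma~\ref{lem:Ttransproper} supplies that idea. Rather than trying to make $V/G$ into a proper $L$-space, one first passes to a single $G\times L_V$-orbit. Concretely: the hypothesis ``only trivial proper representations'' implies (via the trivial target $\{0,1\}$) that $\Gamma\times T$ acts ergodically on $S$; viewing $\phi_V:S\to V$ as a representation of the ergodic $\Gamma\times T$-space $S$ with target group $G\times L_V$, Lemma~\ref{lem:transproper} yields a $G\times L_V$-transitive $U$ with $\phi_U:S\to U$ and a $G\times L_V$-map $\alpha:U\to V$. Now $U/G$ is an $L_V$-\emph{transitive} space, and since $\theta_V(T)$ is dense in $L_V$ and $T$ is abelian, $L_V$ is abelian; hence the stabilizer is normal and $U/G\simeq L$ for the quotient group $L=L_V/N$. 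The left regular action of $L$ on itself is proper, so the induced map $\psi:S/\Gamma\to U/G\simeq L$ is an honest proper representation of the $T$-space $S/\Gamma$. The hypothesis now forces $\psi$ to be essentially constant, hence $L$ is trivial and $U$ is $G$-transitive. Finally one replaces $L_V$ by its quotient by the kernel of its action on $U$ to make $U$ a $T$-proper representation in the sense of the definition. The abelianness of $T$ is used precisely to turn the homogeneous $L_V$-space $U/G$ into a group, which is what manufactures the properness you were unable to obtain from $V/G$ directly.
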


The proof is, mutatis mutandis, the same as the proof of Theorem~\ref{thm:initial proper}.
We will only need to replace Lemmas~\ref{lem:transunique}, \ref{lem:transproper} and \ref{lem:countableproducts}
by the following analogues.
The first one is no more than a reformulation
and its proof is literally the same as the proof of Lemma~\ref{lem:transunique}.

\begin{lemma} \label{Tlem:transunique}
Let $T<S$ be a closed subgroup.
Let $\phi_U:S\to U$ be a transitive $T$-proper representations of $S$
and 
let $\phi_V:S\to V$ be any $T$-proper representations of $S$.
A morphism of proper representations $U\to V$, if exists, is unique.
\end{lemma}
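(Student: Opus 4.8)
The plan is to imitate the proof of Lemma~\ref{lem:transunique} verbatim, since the only change is that the ambient space is $S$ rather than $X$ and the equivariance is with respect to $G\times T$ rather than $G$. First I would take two morphisms of $T$-proper representations $\alpha_1,\alpha_2:U\to V$ and form their equalizer $E=\{u\in U\mid \alpha_1(u)=\alpha_2(u)\}$. Since $\alpha_1,\alpha_2$ are continuous maps into a Polish (hence Hausdorff) space, $E$ is a closed subset of $U$. Since both $\alpha_i$ are $G\times T$-equivariant, $E$ is invariant under the $G\times T$-action on $U$; in particular it is $G$-invariant.

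Next I would invoke transitivity of $U$: because the underlying $G$-action on $U$ is transitive, the only $G$-invariant subsets of $U$ are $\emptyset$ and $U$. Hence $E$ is either empty or all of $U$. To rule out the empty case when a morphism is assumed to exist, I would observe that any morphism $\alpha:U\to V$ satisfies $\phi_V=\alpha\circ\phi_U$ almost everywhere, so if $\alpha_1$ and $\alpha_2$ are both morphisms then $\alpha_1\circ\phi_U=\phi_V=\alpha_2\circ\phi_U$ a.e., i.e.\ $\phi_U(s)\in E$ for a.e.\ $s\in S$. Since the category is nonempty (Assumption~\ref{Tassum}) and $\phi_U$ is an essentially well-defined measurable map, $E$ is nonempty, hence $E=U$, which gives $\alpha_1=\alpha_2$.

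Actually, the cleanest route — and the one the paper signals by saying "the proof is literally the same" — is to note that in this setup the hypotheses already force any two morphisms to agree on the image of $\phi_U$, so the equalizer is nonempty; combined with $G$-invariance and transitivity this forces it to be everything. I do not anticipate a genuine obstacle here: the statement is a soft uniqueness fact and the argument is purely formal, with the single mild point being that one should record why the equalizer is nonempty (so as to exclude the vacuous possibility), which follows from the defining relation $\phi_V = \alpha\circ\phi_U$ a.e.\ shared by both candidate morphisms. I would therefore simply write: "The proof is literally the same as the proof of Lemma~\ref{lem:transunique}," or spell out the two-line argument above for completeness.

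\begin{proof}
This is a reformulation of Lemma~\ref{lem:transunique} and its proof is literally the same.
Given two morphisms of $T$-proper representations $\alpha_1,\alpha_2:U\to V$, the equalizer
$\{u\in U\mid \alpha_1(u)=\alpha_2(u)\}$ is closed (as $V$ is Hausdorff) and $G\times T$-invariant,
hence in particular $G$-invariant, thus empty or full as the $G$-action on $U$ is transitive.
Since $\phi_V$ agrees a.e.\ with $\alpha_i\circ\phi_U$ for $i=1,2$, the equalizer contains $\phi_U(s)$ for a.e.\ $s\in S$,
so it is nonempty, hence full; that is, $\alpha_1=\alpha_2$.
\end{proof}
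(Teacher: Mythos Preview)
Your proof is correct and follows exactly the approach the paper indicates: the paper states that the proof ``is literally the same as the proof of Lemma~\ref{lem:transunique},'' and your argument is precisely that equalizer argument. Your added sentence showing the equalizer is nonempty (via $\alpha_1\circ\phi_U=\phi_V=\alpha_2\circ\phi_U$ a.e.) is a harmless clarification that the paper's original two-line proof leaves implicit.
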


For the next lemma we actually use our assumptions on $T$ and $\Gamma$.

\begin{lemma} \label{lem:Ttransproper}
Let $T<S$ be a closed abelian subgroup such that the $T$-space $S/\Gamma$ has only trivial proper representations.
Let $\phi_V:S\to V$ be a $T$-proper representations of $S$.
Then there exists a transitive $T$-proper representations of $S$, $\phi_U:X\to U$,
and a morphism of $T$-proper representations of $S$, $\alpha:U\to V$.
\end{lemma}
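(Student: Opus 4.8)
The plan is to transcribe the proof of Lemma~\ref{lem:transproper}, with the hypothesis that the $T$-space $S/\Gamma$ has only trivial proper representations taking over the role played there by ergodicity. Write $G\backslash V$ for the orbit space of the proper $G$-action on $V$; by Proposition~\ref{closed orbits} it is a Hausdorff (indeed Polish) space on which the centralizing group $L$ acts continuously, since $L$ permutes $G$-orbits, and this action descends to a faithful action of $L_0\defq L/\ker(L\curvearrowright G\backslash V)$. Post-composing $\phi_V$ with the projection $q\colon V\to G\backslash V$ gives a measurable map $S\to G\backslash V$ which is left $\Gamma$-invariant because $\delta(\Gamma)\subset G$; it therefore descends to a measurable map $\bar\phi\colon S/\Gamma\to G\backslash V$, and reading off the defining equivariance of $\phi_V$ one sees that $\bar\phi$ is equivariant for the right $T$-action on $S/\Gamma$ and the $L_0$-action on $G\backslash V$ through the continuous homomorphism $T\to L_0$ induced by $\theta_V$.

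The first substantive step is to check that $\bar\phi$ is a proper representation of the $T$-space $S/\Gamma$ in the sense of \S\ref{sec:properreps}; concretely, that $G\backslash V$ with the $L_0$-action is a proper $L_0$-space. I would try to deduce this from the properness of the $G$-action on $V$ together with the fact that $L$ centralizes $G$: given precompact $C_1,C_2\subset G\backslash V$, lift them to precompact $C_1',C_2'\subset V$ and observe that $\{l\in L_0\mid lC_1\cap C_2\neq\emptyset\}$ is covered by the image of $\{l\in L\mid \exists g\in G,\ glC_1'\cap C_2'\neq\emptyset\}$. This verification — isolating out exactly where the hypotheses on $T$ and $\Gamma$, via the existence of $\phi_V$, are used to rule out a non-proper orbit space — is the step I expect to be the main obstacle. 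Granting it, the assumption that $S/\Gamma$ has only trivial proper representations forces $\bar\phi$ to be essentially constant.

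It follows that the essential image of $\phi_V$ is contained in a single $G$-orbit $U\subset V$, which by Proposition~\ref{closed orbits} is closed, hence Polish, and is of the form $G/K$ with $K$ the compact stabilizer of a point of $U$; in particular the $G$-action on $U$ is transitive and proper. The orbit $U$ is moreover $L$-invariant: fixing $t\in T$ and using that right translation by $t$ preserves the measure class on $S$, the essential image of $s\mapsto\phi_V(st)=\phi_V(s)\theta_V(t)$ is again contained in $U$, so $U\cap U\theta_V(t)\neq\emptyset$; since $U\theta_V(t)$ is again a $G$-orbit and distinct $G$-orbits are disjoint we get $U\theta_V(t)=U$, and as $\{l\in L\mid Ul=U\}$ is a closed subgroup containing the dense subgroup $\theta_V(T)$ we conclude $UL=U$. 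Thus $L$ acts on $U$ commuting with $G$; putting $L_U\defq L/\ker(L\curvearrowright U)$, letting $\theta_U\colon T\to L_U$ be the induced homomorphism (still with dense image) and $\phi_U\colon S\to U$ be $\phi_V$ regarded as landing in $U$, the data $(U,L_U,\theta_U,\phi_U)$ is a transitive $T$-proper representation of $S$. Finally, the inclusion $\alpha\colon U\hookrightarrow V$ is $G$-equivariant and $T$-equivariant, since the $T$-action on $U$ through $\theta_U$ is by construction the restriction of the $T$-action on $V$ through $\theta_V$, and $\phi_V=\alpha\circ\phi_U$ holds tautologically; hence $\alpha$ is the required morphism of $T$-proper representations, and everything beyond the properness claim of the second paragraph is a routine adaptation of Lemma~\ref{lem:transproper} and Proposition~\ref{closed orbits}.
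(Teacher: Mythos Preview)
Your identified ``main obstacle'' is not merely a verification left undone --- it is a genuine obstruction, and the properness claim is false in general. Take $G=\bbZ$ acting on $V=\bbR$ by integer translation (a proper action) and let $L=\bbZ$ act by $x\mapsto x+n\alpha$ with $\alpha$ irrational; $L$ acts faithfully and commutes with $G$. Then $G\backslash V\cong S^1$, the kernel of $L\acts G\backslash V$ is trivial, and $L_0=\bbZ$ acts by irrational rotation --- certainly not properly. Your sketched argument cannot succeed because it uses nothing beyond the properness of $G\acts V$ and the fact that $L$ centralizes $G$, and these alone do not force $L_0\acts G\backslash V$ to be proper. Nor can the existence of $\phi_V$ rescue the claim globally: $\phi_V$ only sees the part of $V$ it lands in, and says nothing about the orbit structure of $L_0$ on the whole of $G\backslash V$.

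The paper circumvents this in a different way. It first observes (using the hypothesis) that the $\Gamma\times T$-action on $S$ is ergodic, and then --- viewing $\phi_V$ as a representation for the group $G\times L_V$ via $\delta\times\theta_V$ --- invokes Lemma~\ref{lem:transproper} to pass to a single $G\times L_V$-orbit $U\subset V$ before ever looking at $G$-orbit spaces. The crucial point is that $U/G$ is then a \emph{transitive} $L_V$-space, and since $L_V$ is abelian (here is where $T$ abelian is used), after dividing by the kernel one obtains a quotient group $L$ acting on $U/G\cong L$ by translation --- and this action \emph{is} proper. The hypothesis ``only trivial proper representations'' is therefore applied to the map $S/\Gamma\to U/G\cong L$, where properness is clear, rather than to $S/\Gamma\to G\backslash V$, where it fails. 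Once $U/G$ is shown to be a point, $U$ is $G$-transitive and your third paragraph (which is correct) finishes the job. In short: restrict to a $G\times L_V$-orbit \emph{first}, then use abelianness of $L_V$ to get properness on the resulting homogeneous quotient --- do not try to establish properness on all of $G\backslash V$.
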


\begin{proof}
We first observe that the $\Gamma\times T$-action on $S$ is ergodic.
Indeed, otherwise we could find a non-constant invariant map $S\to \{0,1\}$ which could be seen as a proper representation of the $T$-space $S/\Gamma$ with respect to the trivial homomorphism $\theta:T\to \{e\}$ and the trivial action of $\{e\}$ on $V=\{0,1\}$, contradicting the assumption that 
the $T$-space $S/\Gamma$ has only trivial proper representations. 
Considering $X=S$ as an ergodic $\Gamma\times T$-space and the continuous homomorphism $\delta\times \theta_V:\Gamma\times T\to G\times L_V$, we consider $\phi_V:S\to V$ as proper representation of $X$
and conclude by Lemma~\ref{lem:transproper}
the existence of a transitive proper $G\times L_V$-space $U$, a proper representation of the $\Gamma\times T$-space $X$, $\phi_U:X\to U$,
and a $G\times L_V$-map $\alpha:U\to V$.

We next claim that $U$ is in fact $G$-transitive,
alternatively, that $U/G$ is trivial.
Composing $\phi_U$ with $U\to U/G$ we obtain a map $S\to U/G$ which is $\Gamma$-invariant, hence factors via $S/\Gamma$ giving a map
$S/\Gamma\to U/G$. 
Notice that $U/G$ is a transitive $L_V$-space and that $L_V$ is abelain, as it contains the dense abelain subgroup $\theta_V(T)$.
We let $L=L_V/N$, where $N$ is the kernel of the action of $L_V$ on $U/G$ and let $\delta:T\to L$ be the composition of $\theta_V$ with the quotient map $L_V \to L$. We now regard the map $\psi:S/\Gamma\to U/G\simeq L$ as a proper representation of the $T$-space $S/\Gamma$ associated with the continuous homomorphism $\delta:T\to L$.
As the $T$-space $S/\Gamma$ has only trivial proper representations, $\psi$ is essentially constant. Its essential image is clearly $\delta(T)$-invariant,
thus also $L$-invariant as $\delta(T)$ is dense in $L$ ($\theta_V$ is dense in $L_V$ and $L_V\to L$ is surjective).
It follows that $L$ is the trivial group and thus $U/G\simeq L$ is trivial too.
This proves the claim.

Finally, 
we let $L_U=L_V/K$, where $K$ is the kernel of the action of $L_V$ on $U$ and let $\theta_U:T\to L_U$ be the composition of $\theta_V$ with the quotient map $L_V \to L_U$. 
By doing so, the map $\phi_U:S\to U$ becomes a transitive $T$-proper representations of $S$
and $\alpha:U\to V$ becomes the required morphism of $T$-proper representations of $S$.
\end{proof}

\begin{lemma} \label{lem:Tcountableproducts}
Let $T<S$ be a closed abelian subgroup such that the $T$-space $S/\Gamma$ has only trivial proper representations.
Then both the category of $T$-proper representations of $S$ and the subcategory of transitive $T$-proper representations of $S$ admit countable products.
\end{lemma}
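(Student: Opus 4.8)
The plan is to mimic the proof of Lemma~\ref{lem:countableproducts}, using Lemma~\ref{lem:Ttransproper} in place of Lemma~\ref{lem:transproper} to pass from the category of $T$-proper representations of $S$ to its transitive subcategory. First I would treat the ambient category. Given a countable family $\phi_i:S\to V_i$ of $T$-proper representations of $S$, I would form $V=\prod_i V_i$ with the diagonal $G$-action on the left and the diagonal $L$-action on the right, where $L=\prod_i L_{V_i}$; the product of proper actions is proper (the map $G\times V\to V\times V$ is a countable product of proper maps, hence proper), and $L$ acts faithfully and commutes with $G$. For the homomorphism $\theta$ I would take the diagonal $T\to \prod_i L_{V_i}$, but this need not have dense image, so I would replace $L$ by the closure $\overline{\theta(T)}$, a closed subgroup of $\prod_i L_{V_i}$ acting faithfully on $V$ and commuting with $G$; then $\theta:T\to L$ has dense image by construction. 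The diagonal map $\phi=(\phi_i):S\to V$ is $\Gamma\times T$-equivariant with respect to $\delta$ and this $\theta$, so $V$ is a $T$-proper representation of $S$, and the projections $V\to V_i$ are morphisms of $T$-proper representations (each is $G\times T$-equivariant and compatible with $\phi$). A routine check of the universal property shows $V$ is the product in the category of $T$-proper representations of $S$.

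For the transitive subcategory, suppose each $V_i$ is a transitive $T$-proper representation. I would apply Lemma~\ref{lem:Ttransproper} to the product $\phi:S\to V=\prod_i V_i$ just constructed, obtaining a transitive $T$-proper representation $\phi_U:S\to U$ together with a morphism $\alpha:U\to V$. Composing $\alpha$ with the projections $V\to V_i$ gives morphisms $U\to V_i$, so $U$ sits over each $V_i$. To see $U$ is the product within the transitive subcategory, let $W$ be any transitive $T$-proper representation of $S$ equipped with morphisms $\beta_i:W\to V_i$. By the universal property in the ambient category there is a unique morphism $\beta:W\to V$ with projections $\beta_i$. Since $U$ is transitive, $\alpha$ represents $\phi_V=\alpha\circ\phi_U$ a.e., and I claim $\beta$ factors through $\alpha$: the point is that $W$, being a transitive $T$-proper representation, maps to the initial transitive object, and in fact $U$ itself serves as the receptacle since its construction via Lemma~\ref{lem:transproper} (inside Lemma~\ref{lem:Ttransproper}) shows $U$ is obtained from $V$ by restricting to the $G\times L$-orbit supporting $\phi_V$, which is exactly the $G\times L$-orbit through which $\beta\circ\phi_W=\phi_V\circ(\text{nothing})$... more carefully: $\beta(W)$ lies in $\overline{G\cdot \phi_V(S)}$'s orbit closure, and since $\beta\circ\phi_W$ agrees a.e.\ with $\phi_V$, the image $\beta(W)$ meets the support orbit of $\phi_V$, hence (by $W$-transitivity) lands inside it, yielding the desired factorization $\beta=\alpha\circ\beta'$ for a unique morphism $\beta':W\to U$. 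Uniqueness of $\beta'$ follows from Lemma~\ref{Tlem:transunique}, and compatibility of $\beta'$ with the $\beta_i$ is then immediate.

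The main obstacle I anticipate is the density condition on $\theta:T\to L$ in the product construction: the naive diagonal homomorphism into $\prod_i L_{V_i}$ typically fails to be dense, so the fix of replacing $L$ with $\overline{\theta(T)}$ is essential, and one must check that this closure still acts faithfully on $V=\prod_i V_i$ — which it does, since it contains the diagonal image of $T$ and the actions of the individual $L_{V_i}$ were already faithful, so any element acting trivially on all coordinates is trivial. A secondary subtlety is verifying that the restricted-orbit space $U$ produced by Lemma~\ref{lem:Ttransproper} genuinely has the universal property in the transitive subcategory rather than merely mapping to each factor; this is where one leans on the fact that morphisms out of transitive objects are unique (Lemma~\ref{Tlem:transunique}) and on the fact that $U$ is precisely the essential orbit of $\phi_V$, so any transitive object mapping compatibly to all $V_i$ must factor through it. The rest is bookkeeping identical to the proof of Lemma~\ref{lem:countableproducts}.
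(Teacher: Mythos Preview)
Your proposal is correct and follows essentially the same route as the paper: form the diagonal product $V=\prod_i V_i$, replace $\prod_i L_{V_i}$ by the closure of the diagonal image of $T$ to restore the density requirement, and then invoke Lemma~\ref{lem:Ttransproper} to pass to a transitive object dominating all the $V_i$. The paper's proof is terse (it simply says to ``check'' the universal property), whereas you spell out the factorization argument; your exposition wobbles briefly (the aborted sentence, the appeal to ``the initial transitive object'' before it exists, and the phrase ``product of proper maps'' --- properness of the diagonal $G$-action follows already from properness on a single factor, not from a product-of-maps argument), but the substance is right and matches the paper.
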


\begin{proof}
Assume having a countable set of $T$-proper representations of $S$, given by the data represented by $\theta_i:T\to L_i$ and $\phi_i:S\to V_i$.
We let $L<\prod_i L_i$ be the closure of the image of $T$ under the diagonal map $\theta:T\to \prod_i L_i$ and note that it acts freely on $V=\prod_i V_i$.
It is now straight forward to check that the diagonal map $\phi:S\to V$ together with $\theta:T\to L$ form the product of the given countable 
set of $T$-proper representations of $S$.

Assuming the $V_i$'s where transitive, we use Lemma~\ref{lem:Ttransproper}
to find a morphism $\alpha:U\to \prod V_i$ for some transitive $T$-proper representations of $S$
and check that the latter is their product in the subcategory of transitive $T$-proper representations of $S$.
\end{proof}

It turns out that an initial object in the category of $T$-proper representations of $S$ extends naturally to an $N$-algebraic representation of $S$, where $N$ denotes the normalizer of $T$ in $S$.

\begin{prop} \label{prop:Tyonedacomp}
Let $T<S$ be a closed abelian subgroup such that the $T$-space $S/\Gamma$ has only trivial proper representations
and let $G/K$,
$\theta:T \to L<N_G(K)/K$ and $\phi:S \to G/K$ be an initial object
in the category of $T$-proper representations of $S$, as guaranteed by Theorem~\ref{thm:Tinitial proper}.
Then the map $\theta:T \to N_G(K)/K$ extends to the normalizer of $T$ in $S$, $N=N_S(T)$,
and the map $\phi$ could be seen as an $N$-proper representations of $S$.
More precisely, there exists a continuous homomorphism $\bar{\theta}:N\to N_G(K)/K$ satisfying $\bar{\theta}|_T=\theta$
such that, denoting by $\bar{L}$ the closure of $\bar{\theta}$ in $N_G(K)/K$, the data
$G/K$,
$\bar{\theta}:N \to \bar{L}<N_G(K)/K$ and $\phi:S \to G/K$ forms an $N$-proper representation.

In particular, if $T'<S$ is a closed group commuting with $T$ then $\theta'=\bar{\theta}|_{T'}:T' \to N_G(K)/K$
is a continuous homomorphism 
with respect to which $\phi$ is $T'$-equivariant.
\end{prop}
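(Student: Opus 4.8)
The plan is to mimic the Yoneda-type argument of Proposition~\ref{prop:yonedacomp}, but carried out in the richer category of $T$-proper representations of $S$ rather than proper representations of the $T$-space $X$. First I would fix $n\in N=N_S(T)$ and observe that precomposition of $\phi$ with the right-translation $s\mapsto sn$ produces a new map $\phi_n:S\to G/K$. The key point is that, because $n$ normalizes $T$, the map $\phi_n$ is again a $T$-proper representation of $S$: it is still left $\Gamma$-equivariant (right translation commutes with left translation), and its right $T$-equivariance now goes through the homomorphism $\theta^n:=\theta\circ c_n$, where $c_n(t)=n^{-1}tn$, so we must enlarge $L$ to the closure $L'$ of $\theta^n(T)$ inside $N_G(K)/K$ — but since $\theta(T)$ and $\theta^n(T)$ generate conjugate dense subgroups and $N_G(K)/K$ acts properly, $L'$ is still a legitimate acting group and $\phi_n$ is still a genuine $T$-proper representation on the same proper $G$-space $G/K$. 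Since $\phi:S\to G/K$ is \emph{initial}, there is a unique morphism $\bar\theta(n):G/K\to G/K$ of $T$-proper representations from $\phi$ to $\phi_n$; being $G$-equivariant it is an element of $N_G(K)/K$ acting on the right, and uniqueness (Lemma~\ref{Tlem:transunique}) forces $n\mapsto\bar\theta(n)$ to be a homomorphism $N\to N_G(K)/K$ extending $\theta$ (for $n=t\in T$ the translation $\phi_t$ is visibly $\theta(t)\cdot\phi$).

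Next I would verify the defining compatibility: by construction $\phi(sn)=\phi(s)\,\bar\theta(n)$ for a.e.\ $s$, which together with left $\Gamma$-invariance shows $\phi$ is an associated representation of the $\Gamma\times N$-space $S$ on $G/K$ with respect to $\delta$ and $\bar\theta$, once we know $\bar\theta$ is continuous; setting $\bar L$ to be the closure of $\bar\theta(N)$ then makes the tuple $(G/K,\ \bar\theta:N\to\bar L,\ \phi)$ an $N$-proper representation of $S$, since $\bar L<N_G(K)/K$ still acts freely and properly on $G/K$. The continuity of $\bar\theta$ is handled exactly as in Proposition~\ref{prop:yonedacomp}: one forms the Polish space $U=L^0(S,G/K)$ with convergence in measure, on which $N_G(K)/K$ acts freely (properness on $G/K$ passes to $U$) with locally closed — indeed closed — orbits, so the orbit map $N_G(K)/K\to U$, $m\mapsto m\circ\phi$ is a homeomorphism onto its image; the map $N\to U$, $n\mapsto\phi\circ R_n$ is measurable by \cite[Chapter VII, Lemma 1.3]{margulis-book}, and composing with the inverse of the orbit map shows $\bar\theta$ is measurable, hence continuous by \cite[Lemma 2.1]{Rosendal}. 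Finally, for the ``in particular'' clause: if $T'<S$ commutes with $T$ then $T'\subseteq N_S(T)$, so $\theta':=\bar\theta|_{T'}$ is defined and continuous, and the $N$-equivariance of $\phi$ restricts to $T'$-equivariance.

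The main obstacle I anticipate is the bookkeeping around the acting group $L$. In the definition of a $T$-proper representation, the homomorphism $\theta:T\to L$ is required to have \emph{dense image}, so when I translate $\phi$ by $n\in N$ the naive candidate acting group is the closure of $\theta^n(T)$, which need not literally equal $L$; I must check that enlarging (or replacing) $L$ by the closure of the subgroup of $N_G(K)/K$ generated by all these conjugates still yields a group acting \emph{freely} and \emph{properly} on $G/K$, so that $\phi_n$ is an object of the \emph{same} category and the initiality of $\phi$ applies. Freeness is automatic since $N_G(K)/K$ acts freely on $G/K$; properness of the $N_G(K)/K$-action on $G/K$ (noted in the paragraph before Assumption~\ref{Tassum}) gives properness for any closed subgroup. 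Once this is set up cleanly, everything else is a transcription of the arguments already in \S\ref{sec:properreps}, so no genuinely new idea is needed beyond recognizing that right $N$-translation preserves the category.
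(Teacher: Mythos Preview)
Your proposal is correct and follows essentially the same argument as the paper's proof: precompose $\phi$ with right translation by $n\in N_S(T)$, recognize the result as another $T$-proper representation on $G/K$, invoke initiality to obtain the $G$-automorphism $\bar\theta(n)\in N_G(K)/K$, check the homomorphism property by uniqueness, and establish continuity via the $L^0(S,G/K)$ orbit-map argument. Your anticipated obstacle about the acting group $L$ dissolves on inspection: since $n$ normalizes $T$ one has $\theta^n(T)=\theta(n^{-1}Tn)=\theta(T)$, so its closure is exactly $L$ again --- the paper simply reuses the same $L$ without comment, and no enlargement is needed.
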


The proof is essentially the same as the proof of \cite[Theorem~4.6]{BF-MSR}.

\begin{proof}
Fix $n\in N$.
Set $d'=d\circ \inn(n):T\to L$, where $\inn(n):T\to T$ denotes the inner automorphism $t\mapsto ntn^{-1}$,
and $\phi'=\phi\circ \rho(n):S\to G/K$, where $\rho(n):S\to S$ denotes the right regular action $s\mapsto sn^{-1}$.
We claim that the data $L$, $G/K$, $d'$ and $\phi'$ forms a new $T$-proper representation of $S$.
Indeed, for almost every $s\in S$, every $\gamma\in \Gamma$ and every $t\in T$,
\[ \phi'(\gamma s t)=\phi(\gamma s t n^{-1})=\phi(\gamma s n^{-1} t^n)= \delta(\gamma) \phi' (s)d'(t). \]

By the fact that the $T$-proper representation of $S$ given by $L$, $G/K$,
$d$ and $\phi$ forms an initial object we get the dashed vertical arrow, which we denote $\bar{d}(n)$, in the following diagram.
\begin{equation} \label{diag-AG}
\xymatrix{ S \ar[r]^{\phi} \ar[rd]_{\phi'} & G/K \ar@{.>}[d]^{\bar{d}(n)}  \\
		    & G/K  }
\end{equation}
By the uniqueness of the dashed arrow, the map $n\mapsto \bar{d}(n)$ is easily checked to form a homomorphism from $N$ to the group
of $G$-automorphism of $G/K$,
which we identify with $N_G(K)/K$.
Note that for $n\in T$, $d(t):G/K\to G/K$ could also be taken to be the dashed arrow, thus $\bar{d}(t)=d(t)$, by uniqueness.
Therefore the homomorphism $\bar{d}$ extends $d$.
The fact that such a homomorphism is necessarily continuous is explained in the proof of \cite[Theorem~4.7]{BF-products}.
We define $\bar{L}$ to be the closure the image of $\bar{d}$.
We thus indeed obtain an $N$-representation of $S$, given by the group $\bar{L}$, the coset space $G/K$, the homomorphism $\bar{d}:N \to \bar{L}$ and the (same old) map
$\phi:S \to G/K$.
\end{proof}

\section{Only trivial $T$-proper representations of $S$} \label{sec:trivproper}

In the previous section we fixed, apart of an lcsc group $S$ and its closed subgroup $\Gamma$ also  
also a Polish group $G$ and a continuous homomorphism $\delta:\Gamma\to G$, and studied the associated $T$-proper representations of $S$.
In this section we will vary $G$ and $\delta$.

\begin{example}
If either $\delta:\Gamma\to G$ extends to $S$ or it has a precompact image then Assumption~\ref{Tassum} is always satisfied.
To see the first case, assuming $d:S\to G$ is a continuous homomorphism such that $\delta=d|_\Gamma:\Gamma\to G$, 
setting $V=G$, $\phi=d:S\to G$
and for any given $T<S$ letting $L$ be the closure of $d(T)$ in $G$ and $\theta=d|_T:T\to L$,
we get a $T$-proper representations of $S$.
For the second case, assuming $K=\overline{\delta(\Gamma)}$ is compact
setting $V=G/K$, $\phi:S \to V$ be the constant map to the trivial coset
and for any given $T<S$ letting $\theta:T\to L=\{e\}$ be the trivial morphism,
we get a $T$-proper representations of $S$.
\end{example}

The above two examples are extreme cases of what we call trivial $T$-proper representations of $S$.

\begin{defn} \label{def:trivprop}
Fix an lcsc group $S$ and a closed subgroup $\Gamma<S$.
Let $T<S$ be a closed subgroup.
We will say that $S$ has only trivial $T$-proper representations if 
for any Polish group $G$ and a continuous homomorphism $\delta:\Gamma\to G$
such that there exits a corresponding $T$-proper representations of $S$,
there exit a compact subgroup $K<G$ normalized by $\delta(\Gamma)$,
a continuous homomorphism $\theta:S\to N_G(K)/K$ and $n\in N_G(K)$ such that the composition of $\inn(n)\circ\delta:\Gamma\to N_G(K)$ with the
quotient $N_G(K)\to N_G(K)/K$ equals $\theta|_\Gamma$
and there exists an a.e defined measurable $\phi:S\to G/K$ which is left $\Gamma$-equivariant via $\delta$ and right $S$-equivariant via $\theta$
such that $\phi$ together with $\theta|_T$
form an initial object in corresponding category of 
$T$-proper representations of $S$.
\end{defn}

Our goal in this section is to prove the following theorem.

\begin{theorem} \label{thm:Tnoproper}
Let $S$ be a locally compact second countable group and $\Gamma<S$ a closed subgroup.
Assume that there exists a topologically generating (finite or infinite) sequence of abelian subgroup $T_0,T_1,\ldots <S$ such that for every $i>0$,
$T_i$ commutes with $T_{i-1}$ and for every $i\geq 0$, $T_i$ acts ergodically on $S/\Gamma$.
Assume also that $S/N$ has trivial abelianization, where $N$ is the closed normal subgroup generated by $\Gamma$.
Let $T=T_0$.
Then $S$ has only trivial $T$-proper representations.
\end{theorem}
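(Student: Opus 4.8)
The plan is to run the argument of the proof of Theorem~\ref{thm:noproper}, but in the category of $T$-proper representations of $S$, using Theorem~\ref{thm:Tinitial proper} and Proposition~\ref{prop:Tyonedacomp} in place of Theorem~\ref{thm:initial proper} and Proposition~\ref{prop:yonedacomp}, and finishing with Lemma~\ref{lem:extension} once a genuinely group-valued map has been produced. Fix a Polish group $G$ and a continuous homomorphism $\delta:\Gamma\to G$ for which some $T$-proper representation of $S$ exists, where $T=T_0$. By Theorem~\ref{thm:noproper} the $T_i$-space $S/\Gamma$ has only trivial proper representations for every $i$, so Theorem~\ref{thm:Tinitial proper} applies: the category of $T_0$-proper representations of $S$ (nonempty by assumption) has a transitive initial object, which we write as $\phi:S\to G/K$ with $K<G$ compact, $\theta_0:T_0\to N_G(K)/K$ a continuous homomorphism with dense image, and $\phi(\gamma st)=\delta(\gamma)\phi(s)\theta_0(t)$ for almost every $s$.

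First I would promote $\phi$ to a simultaneous $T_i$-proper representation of $S$ for all $i$, keeping the same compact $K$. Argue by induction on $j$: supposing $\phi$ is an initial object of the category of $T_j$-proper representations of $S$, Proposition~\ref{prop:Tyonedacomp} extends $\theta_j$ to $N_S(T_j)$, and since $T_{j+1}$ commutes with $T_j$ it lies in $N_S(T_j)$, so we obtain a continuous homomorphism $\theta_{j+1}:T_{j+1}\to N_G(K)/K$ under which $\phi$ becomes a $T_{j+1}$-proper representation of $S$. That category is now nonempty, so Theorem~\ref{thm:Tinitial proper} furnishes a transitive initial object $\psi$ in it, hence a morphism $\alpha:\psi\to\phi$. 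Applying Proposition~\ref{prop:Tyonedacomp} to $\psi$ together with the commuting subgroup $T_j$ turns $\psi$ into a $T_j$-proper representation of $S$, so $T_j$-initiality of $\phi$ provides a morphism $\beta:\phi\to\psi$. A density argument (both $G$-spaces are transitive, hence $\phi,\psi$ have dense essential image) shows that $\alpha$ and $\beta$ are morphisms in both categories; hence $\beta\alpha$ and $\alpha\beta$ are endomorphisms of transitive objects and therefore the identity by Lemma~\ref{Tlem:transunique}. Thus $\alpha$ is an isomorphism and $\phi$ is $T_{j+1}$-initial too. I expect this step — carrying the single compact $K$ and the coherent family $\theta_i:T_i\to N_G(K)/K$ along the whole chain — to be the main obstacle.

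At this point $\phi:S\to G/K$ is left $\Gamma$-equivariant via $\delta$ and right $T_i$-equivariant via $\theta_i:T_i\to N_G(K)/K$ for every $i$, and the $T_i$ topologically generate $S$. I would invoke \cite[Lemma~5.1]{BF-MSR} with $X=S$, exactly as in the proof of Lemma~\ref{lem:extension} but now for the proper $N_G(K)/K$-space $G/K$, to get a continuous homomorphism $\theta:S\to N_G(K)/K$ restricting to each $\theta_i$ and under which $\phi$ is right $S$-equivariant. Composing $\phi$ with $G/K\to G/N_G(K)$ yields a right $S$-invariant map, hence essentially constant with value $gN_G(K)$, and left $\Gamma$-equivariance forces $\delta(\Gamma)\subseteq N_G(gKg^{-1})$. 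Conjugating the whole picture by $g$ — that is, replacing $K$ by $K_1=gKg^{-1}$, $\phi$ by its composition with the $G$-equivariant homeomorphism $G/K\to G/K_1$, $hK\mapsto hg^{-1}K_1$, and $\theta,\theta_i$ by $\inn(g)\circ\theta,\inn(g)\circ\theta_i$ — yields an essentially $H$-valued map, where $H=N_G(K_1)/K_1\subseteq G/K_1$, which is left $\Gamma$-equivariant via the induced homomorphism $\bar\delta:\Gamma\to H$ and right $S$-equivariant via $\theta:S\to H$, and is still an initial $T$-proper representation. The computation in the proof of Lemma~\ref{lem:extension} then applies verbatim inside the group $H$: the map $s\mapsto\phi(s)\theta(s)^{-1}$ is right $S$-invariant, hence constant $\equiv h_0\in H$, and left $\Gamma$-equivariance gives $\bar\delta=\inn(h_0)\circ\theta|_\Gamma$; lifting $h_0$ to $\tilde h_0\in N_G(K_1)$ and setting $n=\tilde h_0^{-1}$ makes $\theta|_\Gamma$ equal to the composition of $\inn(n)\circ\delta$ with $N_G(K_1)\to N_G(K_1)/K_1$. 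This exhibits $(K_1,\theta,n,\phi)$ as the data demanded by Definition~\ref{def:trivprop}, so $S$ has only trivial $T$-proper representations. The secondary subtlety is that $G/K$ is not a group, which is exactly why one must first localize $\phi$ inside $H$ before the cocycle manipulation of Lemma~\ref{lem:extension} is available.
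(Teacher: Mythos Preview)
Your approach is the paper's: propagate initiality along the chain $T_0,T_1,\ldots$ by applying Proposition~\ref{prop:Tyonedacomp} twice at each step (swapping the roles of $T_j$ and $T_{j+1}$), and then finish via Lemma~\ref{lem:extension}. There is, however, one step that does not go through as written. You invoke \cite[Lemma~5.1]{BF-MSR} ``for the proper $N_G(K)/K$-space $G/K$'' to produce $\theta:S\to N_G(K)/K$ \emph{before} knowing that $\phi$ is essentially group-valued; that lemma, as deployed in Lemma~\ref{lem:extension}, is for maps into a group, and you give no argument for the extension to a coset target. The step is also unnecessary: the composite $S\to G/K\to G/N_G(K)$ is already right $T_i$-invariant for every $i$ (each $\theta_i$ lands in $N_G(K)/K$, which acts trivially on $G/N_G(K)$), hence right $S$-invariant by topological generation, hence essentially constant --- no $\theta$ is needed at this point. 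The paper does exactly this, and moreover chooses the identifying base point $u\in U$ in the support of $\phi$, so that the constant coset is the trivial one; then $\phi$ is essentially $N_G(K)/K$-valued and $\delta(\Gamma)<N_G(K)$ from the outset, and Lemma~\ref{lem:extension} applies verbatim to the group $N_G(K)/K$ to yield $\theta$ and $n$ in one stroke. Your conjugation by $g$ and the computation inside $H$ accomplish the same thing, so once you drop the premature appeal to \cite[Lemma~5.1]{BF-MSR} and justify the constancy of $S\to G/N_G(K)$ directly from the $\theta_i$, your argument is correct.

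A minor point on the induction step: the parenthetical ``hence $\phi,\psi$ have dense essential image'' is neither correct in general nor what is needed. The intertwining relation $\alpha(x\cdot\theta'_j(t))=\alpha(x)\cdot\theta_j(t)$ holds for $x$ in the essential image of $\phi_1$ and then propagates to all of $G/K_1$ by $G$-equivariance of $\alpha$ together with $G$-transitivity --- that is the actual mechanism. The paper does not even formulate this and simply observes that $\alpha\circ\beta$ and $\beta\circ\alpha$ are $G$-self-maps of $G$-transitive spaces agreeing a.e.\ (via $\phi$ and $\phi_1$) with the identity, hence equal to the identity.
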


\begin{proof}
Let $G$ be a Polish group and $\delta:\Gamma\to G$ be a continuous homomorphism
such that there exits a corresponding $T$-proper representations of $S$.
By Theorem~\ref{thm:Tinitial proper} the category of $T$-proper representations of $S$
has a transitive initial object.
We let $\phi:S\to U$ and $\theta_0:T=T_0\to L_0$ be the data associated with this initial object.
We pick a base point in $u\in U$ which is in the support of the measure class pushed by $\phi$ from the Haar measure on $S$
and identify accordingly $U=G/K$ where $K$ is the compact group stabilizing $u$.
We let $N$ be the normalizer of $K$ in $G$ and also identify accordingly $L_0$ as a closed subgroup of $N/K$.
We will argue to show that $\delta(\Gamma)<N$
and there exists a continuous homomorphism $\theta:S\to N/K$ extending $\theta_0$ such that $\phi$ is right $S$-invariant via $\theta$
and there exists $n\in N$ such that the composition of $\inn(n)\circ\delta:\Gamma\to N$ with the
quotient $N\to N/K$ equals $\theta|_\Gamma$.
This will prove the theorem.

Setting $T'=T_1$ in Proposition~\ref{prop:Tyonedacomp}
we get a continuous homomorphism $\theta_1:T_1\to L_1<N/K$ with respect to which $\phi$ is also $T_1$-equivariant.
We thus may regard $\phi:S\to G/K$ as a $T_1$-proper representation of $S$.
In particular, Assumption~\ref{Tassum} is satisfied for the category of $T_1$-proper representation of $S$,
so by Theorem~\ref{thm:Tinitial proper} this category has also an initial object which is transitive.

We now claim that $\theta_1$ and $\phi$ form an initial object also in the category of $T_1$-proper representation of $S$.
Letting $\phi_1:S\to G/K_1$ and $\theta'_1:T_1\to L'_1$ be an initial object in the category of $T_1$-proper representations of $S$,
we will prove the claim by showing that this object is isomorphic in the category to the object formed by $\phi$ and $\theta_1$.
Clearly we have a unique morphism from $\phi_1$ to $\phi$, represented by a $G$-map $\alpha:G/K_1\to G/K$
such that $\phi$ agrees a.e with $\alpha\circ \phi_1$.
We need to show that $\alpha$ is invertible as a morphism in the category, which is the same as being invertible as a $G$-map.
Applying Proposition~\ref{prop:Tyonedacomp} again, interchanging the role of $T_0$ and $T_1$ we get a continuous homomorphism $\theta'_0:T_0\to L'_0<N_G(K_1)/K_1$ with respect to which $\phi_1$ is an object in the category of $T_0$-proper representation of $S$ in which $\theta$ and $\phi$ form an initial object.
We thus get a $G$-map $\beta:G/K_1\to G/K$
such that $\phi_1$ agrees a.e with $\beta\circ \phi$.
It follows that $\phi$ agrees a.e with $\beta\circ\alpha\circ \phi$ and in particular, $\beta\circ\alpha=\id_{G/K}$ as this two $G$-maps of a $G$-transitive space agree somewhere. Similarly, $\alpha\circ \beta=\id_{G/K_1}$.
It follows that $\alpha$ is indeed invertible and this proves the claim.

Proceeding by induction in an obvious way, we get for every $i\geq 0$ a continuous homomorphism $\theta_i:T_i\to N/K$
with respect to which the map $\phi$ is $T_i$-equivariant.
The space of orbits of the right action of $N/K$ on $G/K$ is naturally identified with $G/N$ and the standard map $G/K\to G/N$ is identified with the quotient map with respect to this action.
We compose now the map $\phi:S\to G/K$ with this map $G/K\to G/N$, getting a map $S\to G/N$ which is right $T_i$-invariant for every $i$.
As the groups $T_i$ topologically generate $S$, we get that $S\to G/N$ is right $S$-invariant, hence essentially constant.
It follows that the image of $\phi$ is supported on a unique $N/K$ orbit in $G/K$.
As the base point $u\in U\simeq G/K$ is in this support, we conclude that the image of $\phi$ is supported in the subspace $N/K\subset G/K$.
As this support is $\delta(\Gamma)$-invariant, it follows that $\delta(\Gamma)<N$.
By restricting the codomain we view $\delta$ as a homomorphism from $\Gamma$ to $N$ and
composing it with the quotient map $N\to N/K$ we obtain the homomorphism $\delta':\Gamma\to N/K$.
Restricting the codomain we also view $\phi$ as an a.e defined measurable function $S\to N/K$ which satisfies
for every $i$, for Haar a.e $s\in S$, for every $\gamma\in \Gamma$ and every $t\in T_i$,
\[ \phi(\gamma s t)=\delta'(\gamma)\phi(s)\theta_i(t). \]
Applying Lemma~\ref{lem:extension} in this context
we get that 
there exists a continuous homomorphism $\theta:S\to N/K$ such that for every $i$, $\theta_i=\theta|_{T_i}$ and under which $\phi$ is right $S$-equivariant.
Furthermore, there exists $nK\in N/K$ such that $\delta'=\inn(nK)\circ \theta|_\Gamma$.
In particular we have that $\theta_0=\theta|_{T_0}$
and for $n\in N$ the composition of $\inn(n)\circ\delta:\Gamma\to N$ with the
quotient $N\to N/K$ equals $\theta|_\Gamma$.
This concludes our proof.
\end{proof}


\section{Algebraic representation of ergodic actions} \label{sec:AREA}

In this section we fix
a field $k$ with a non-trivial absolute value which is separable and complete (as a metric space)
and a $k$-algebraic group ${\bf G}$.
We note that ${\bf G}(k)$ has the structure of a Polish topological group, see \cite[Proposition~2.2]{BDL}.
We also fix a locally compact second countable group $T$ and a continuous homomorphism $d:T\to {\bf G}(k)$,
where ${\bf G}(k)$ is considered with its Polish topology.
We let $X$ be a Lebesgue $T$-space.

\begin{defn}
Given all the data above, an algebraic representation of $X$
consists of a $k$-${\bf G}$-algebraic variety ${\bf V}$ and
an a.e defined measurable map $X:T \to {\bf V}(k)$ such that for almost every $x\in X$ and every $t\in T$,
\[ \phi(tx)=d(t)\phi(x). \]
Sometimes we abbreviate the notation by saying that ${\bf V}$ is an algebraic representation of $X$,
and denote $\phi$ by $\phi_{\bf V}$ for clarity.
A morphism from the algebraic representation ${\bf U}$ to the algebraic representation ${\bf V}$ consists of
a $k$-algebraic map $\psi:{\bf U}\to {\bf V}$ which is ${\bf G}$ equivariant,
and such that $\phi_{\bf V}$ agrees almost everywhere with $\psi\circ \phi_{\bf U}$.
An algebraic representation ${\bf V}$ of $X$ is said to be a coset algebraic representation
if in addition
${\bf V}$ is isomorphic as an algebraic representation to a coset variety ${\bf G}/{\bf H}$ for some $k$-algebraic subgroup ${\bf H}<{\bf G}$.
\end{defn}

The following is one of the basic observations of the theory of algebraic representations of ergodic actions.
For a proof and discussion we refer the reader to \cite{BF-products}.

\begin{prop}[{\cite[Proposition 4.2]{BF-products}}] \label{prop:AG-ergodic}
Assume $X$ is $T$-ergodic.
Then for every algebraic representation $\phi_{\bf V}:X\to {\bf V}(k)$ there exists a coset representation $\phi_{{\bf G}/{\bf H}}:X \to{\bf G}/{\bf H}(k)$
and a morphism of algebraic representations $i:{\bf G}/{\bf H}\to {\bf V}$,
that is $i$ is a $\bf G$-equivariant $k$-morphism such that for a.e $x\in X$, $\phi_{\bf V}(x)=i\circ \phi_{{\bf G}/{\bf H}}(x)$.
\end{prop}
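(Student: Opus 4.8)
The plan is to reduce an arbitrary algebraic representation to a coset representation by taking an orbit of smallest dimension (and then smallest degree) in the Zariski closure of the essential image of $\phi_{\bf V}$. First I would consider the map $\phi_{\bf V}:X\to {\bf V}(k)$ and, for each $x$, look at the Zariski closure $\overline{{\bf G}\phi_{\bf V}(x)}$ of the ${\bf G}$-orbit through $\phi_{\bf V}(x)$; since $\phi_{\bf V}$ is $d$-equivariant and ${\bf G}$-orbits are permuted trivially under the ${\bf G}$-action on the set of orbit-closures, the assignment $x\mapsto \overline{{\bf G}\phi_{\bf V}(x)}$ is $T$-invariant as a map into the (countable, by Noetherianity and boundedness of degree) set of ${\bf G}$-invariant subvarieties, hence by $T$-ergodicity it is essentially constant, equal to some irreducible ${\bf G}$-invariant subvariety ${\bf W}\subseteq {\bf V}$. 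Replacing ${\bf V}$ by ${\bf W}$, I may assume the essential image of $\phi_{\bf V}$ is Zariski dense in ${\bf V}$ and ${\bf V}$ is ${\bf G}$-invariant and irreducible.

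Next I would invoke the structure of ${\bf G}$-varieties: there is a ${\bf G}$-orbit ${\bf O}\subseteq {\bf V}$ of minimal dimension, and among those of minimal degree; such an orbit is closed (a $\bf G$-orbit of minimal dimension in its closure is closed, by the standard fact that the boundary of an orbit is a union of orbits of strictly smaller dimension). Now I would argue that $\phi_{\bf V}(x)$ lands in this orbit ${\bf O}$ for a.e. $x$. The point is that the function $x\mapsto (\dim {\bf G}\phi_{\bf V}(x),\deg \overline{{\bf G}\phi_{\bf V}(x)})$ is again $T$-invariant with countable range, hence essentially constant; and if the constant value were not the minimal one realized by a closed orbit, one derives a contradiction with Zariski density of the image together with the fact that orbits of this fixed dimension sweep out a constructible, $\bf G$-invariant, hence (by density) dense subset whose closure must then contain ${\bf O}$ in its boundary — forcing the generic orbit dimension to exceed the value we picked, a contradiction. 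More cleanly: take ${\bf O}$ the unique open ${\bf G}$-orbit if it exists, or in general pass to the open ${\bf G}$-invariant subset ${\bf V}^\circ$ consisting of points whose orbit has the generic (maximal) dimension; its complement is ${\bf G}$-invariant of strictly smaller dimension, so by the ergodicity argument $\phi_{\bf V}$ essentially maps into one of the finitely many irreducible components — iterating, one reaches a locally closed ${\bf G}$-orbit; replacing it by its closure and repeating brings us to a closed orbit ${\bf O}={\bf G}/{\bf H}$ containing the essential image.

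Having located a closed orbit ${\bf O}\cong {\bf G}/{\bf H}$ with $\phi_{\bf V}(X)\subseteq {\bf O}(k)$ essentially, I would set $\phi_{{\bf G}/{\bf H}}:=\phi_{\bf V}$ viewed as a map into ${\bf G}/{\bf H}$ and take $i:{\bf G}/{\bf H}\hookrightarrow {\bf V}$ to be the inclusion of the orbit, which is a $\bf G$-equivariant $k$-morphism; then by construction $\phi_{\bf V}=i\circ\phi_{{\bf G}/{\bf H}}$ a.e., which is exactly the assertion. One subtlety I should address: I need the essential image to lie in the $k$-points of a single ${\bf G}$-orbit defined over $k$, not merely over $\bar k$; this is where I would use that $d(T)$ and the base point together are defined over $k$, so Galois-conjugate orbits are permuted in a way compatible with the $T$-action, and ergodicity again pins down a Galois-stable, hence $k$-defined, orbit.

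The main obstacle I anticipate is precisely this descent-to-closed-$k$-orbit step: turning the soft "essentially constant, countable range" ergodicity argument into an honest proof that $\phi_{\bf V}$ factors through a single closed ${\bf G}$-orbit \emph{defined over $k$}, handling non-perfect $k$ and the possibility that no $k$-point lies on the closed $\bar k$-orbit one naively picks. Fortunately the statement cites \cite[Proposition 4.2]{BF-products}, so in practice I would simply quote that reference; the sketch above is how I would reconstruct it, with the Zariski-closure-of-orbit / ergodicity dichotomy as the engine and the minimal-dimension-orbit-is-closed fact as the geometric input.
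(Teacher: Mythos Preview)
The paper does not prove this proposition at all; the sentence preceding it reads ``For a proof and discussion we refer the reader to \cite{BF-products}'', and that is the entirety of the paper's treatment. Your final paragraph reaches the same conclusion, so on the bottom line you and the paper agree: quote \cite[Proposition~4.2]{BF-products}.

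Regarding your sketch itself, the overall strategy (ergodicity forces the essential image into a single ${\bf G}$-orbit, then take the inclusion as $i$) is indeed the argument in \cite{BF-products}, but your first pass contains a genuine error: the collection of ${\bf G}$-invariant $k$-subvarieties of ${\bf V}$ is \emph{not} countable in general (already points in ${\mathbb A}^1$ over an uncountable $k$ show this), and neither Noetherianity nor any bound on degree makes it so. Hence you cannot apply ergodicity directly to the map $x\mapsto \overline{{\bf G}\phi_{\bf V}(x)}$ viewed as landing in a countable set. Your ``more cleanly'' alternative in the second paragraph is the right fix and is essentially what the cited reference does: one takes a minimal ${\bf G}$-invariant closed $k$-subvariety containing the essential image (Noetherianity gives existence), strips off the closed locus of non-maximal orbit dimension, and uses ergodicity on the integer-valued function $x\mapsto \dim {\bf G}\phi_{\bf V}(x)$ together with the existence of a geometric quotient on the open stratum to land in a single orbit. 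The $k$-rationality issue you flag is real and is handled in \cite{BF-products}; your instinct to defer to the citation for that point is sound.
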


\begin{defn} \label{def:trivalg}
We will say that the $T$-space $X$ has only trivial algebraic representations if 
every algebraic representation $\phi_{\bf V}:X\to {\bf V}(k)$ is essentially constant.
\end{defn}

\begin{prop}
If $T$ is abelian and the $T$-space $X$ has only trivial proper representations as in Definition~\ref{def:trivprop}
then the $T$-space $X$ has only trivial algebraic representations.
\end{prop}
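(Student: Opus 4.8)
The plan is to reduce an arbitrary algebraic representation of $X$ to a proper representation, to which the hypothesis applies. So let $\phi_{\bf V}:X\to {\bf V}(k)$ be an algebraic representation, associated to the continuous homomorphism $d:T\to {\bf G}(k)$. By Proposition~\ref{prop:AG-ergodic} (noting that a $T$-space with only trivial proper representations is in particular $T$-ergodic — if it were not, a non-constant $\{0,1\}$-valued invariant function would be a non-trivial proper representation for the trivial group, exactly as in the proof of Lemma~\ref{lem:Ttransproper}) we may assume ${\bf V}={\bf G}/{\bf H}$ is a coset variety, with $\phi=\phi_{{\bf G}/{\bf H}}:X\to {\bf G}/{\bf H}(k)$. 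It suffices to show this $\phi$ is essentially constant.

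The key point is that the space ${\bf G}/{\bf H}(k)$, equipped with the action of the closure $G\defq\overline{d(T)}<{\bf G}(k)$, carries the structure of a proper $G$-space once we quotient appropriately. First replace ${\bf G}$ by the Zariski closure of $d(T)$, so that $d(T)$ is Zariski dense. Then, by standard structure theory of algebraic actions over local (more generally, complete separably valued) fields, the ${\bf G}(k)$-orbits on ${\bf V}(k)$ are locally closed and the point-stabilizers in ${\bf G}(k)$ are the $k$-points of algebraic subgroups; passing to the orbit of an essential value of $\phi$ (which is a single orbit by ergodicity, since $X\to {\bf V}(k)/{\bf G}(k)$ is $T$-invariant) we may take ${\bf V}(k) = {\bf G}(k)/{\bf H}(k)$ with ${\bf H}<{\bf G}$ a $k$-subgroup. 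Now let ${\bf H}^{\circ\circ}$ be the largest $k$-subgroup of ${\bf H}$ normal in ${\bf G}$ contained in ${\bf H}$, or more to the point, mod out the kernel ${\bf N}$ of the ${\bf G}$-action on ${\bf G}/{\bf H}$, reducing to the case where ${\bf G}$ acts faithfully and $d(T)$ is still Zariski dense. In this situation the action of $G=\overline{d(T)}$ on $V\defq{\bf G}/{\bf H}(k)$ is proper: this is the standard fact that for an algebraic group over a local field acting with trivial kernel on a quasi-projective variety, the action of a closed subgroup whose Zariski closure is all of ${\bf G}$ on the $k$-points is proper in the topological sense — equivalently, stabilizers are compact and orbit maps are homeomorphisms, which follows from the properness of algebraic quotient maps and a valuative argument (or from the discussion of proper representations in \cite{BF-products}). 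Thus $\phi:X\to V$ is a proper representation of the $T$-space $X$ in the sense of \S\ref{sec:properreps}.

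By the hypothesis that $X$ has only trivial proper representations, $\phi$ is essentially constant. Tracing back through the reduction — the morphism $i:{\bf G}/{\bf H}\to {\bf V}$ of Proposition~\ref{prop:AG-ergodic} sends $\phi_{{\bf G}/{\bf H}}$ to $\phi_{\bf V}$ almost everywhere, and quotienting by ${\bf N}$ is injective on the relevant orbit — we conclude that $\phi_{\bf V}$ is essentially constant as well, as required.

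The main obstacle is the properness claim: one must verify carefully that the topological action of $G=\overline{d(T)}$ on the $k$-points ${\bf G}/{\bf H}(k)$ is proper in the precise sense used in \S\ref{sec:properreps} (continuity and properness of $(g,v)\mapsto(v,gv)$), and in particular that passing to a single orbit, modding out the ineffective kernel, and taking the closure $\overline{d(T)}$ do not destroy this. This rests on the local-field structure theory of algebraic group actions — closedness of orbits on $k$-points for anisotropic-mod-center situations can fail, so one genuinely needs that ${\bf G}$ equals the Zariski closure of $d(T)$ and acts faithfully to get compact stabilizers and closed orbits; alternatively one invokes the corresponding statement already established in \cite{BF-products}, of which this proposition is essentially a repackaging.
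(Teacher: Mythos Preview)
Your reduction strategy is sound up to the point where you assert properness, but the ``standard fact'' you invoke is false. Take ${\bf G}=\PGL_2$ over $k=\bbR$ acting on ${\bf V}={\bf P}^1$: the action is faithful, $G={\bf G}(k)$ is certainly Zariski dense in ${\bf G}$, yet the stabilizer of any $k$-point is a Borel subgroup, which is not compact, so the $G$-action on ${\bf V}(k)$ is not proper. Faithfulness of an algebraic action does not force topological properness, and nothing in \cite{BF-products} says otherwise. Your own caveat in the last paragraph (``closedness of orbits \ldots\ can fail'') is exactly the problem, and the conditions you list there --- Zariski-density of $d(T)$ and faithfulness --- do not rescue it.

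The missing ingredient is the hypothesis that $T$ is abelian, which you never actually use. Once you replace ${\bf G}$ by the Zariski closure of $d(T)$, the group ${\bf G}$ itself becomes \emph{abelian}; hence every $k$-subgroup ${\bf H}<{\bf G}$ is normal, and the kernel of the ${\bf G}$-action on ${\bf G}/{\bf H}$ is all of ${\bf H}$. Modding it out therefore leaves you with ${\bf V}={\bf G}$ and the left regular action, which is trivially proper and free. This is precisely the paper's argument: reduce to a coset representation via Proposition~\ref{prop:AG-ergodic}, use abelianness to pass to the quotient group so that ${\bf V}={\bf G}$, and observe that ${\bf G}(k)$ acting on itself is a proper $G$-space. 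Your proof becomes correct --- and collapses to the paper's --- once you insert this observation; the appeal to structure theory of algebraic actions over local fields is both unnecessary and, as stated, incorrect.
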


\begin{proof}
Let $\phi_{\bf V}:X\to {\bf V}(k)$ be an algebraic representation of the $T$-space $X$.
By replacing ${\bf G}$ with the Zariski closure of $d(T)$ we may assume that ${\bf G}$ is abelian.
By Proposition~\ref{prop:AG-ergodic} we may assume that ${\bf V}={\bf G}/{\bf H}$ for some $k$-algebraic subgroup ${\bf H}<{\bf G}$.
By further replacing ${\bf G}$ with the quotient group ${\bf G}/{\bf H}$ we may assume that in fact ${\bf V}={\bf G}$.
We thus get that $\phi:X\to {\bf G}(k)$ is a proper representation of the $T$-space $X$ and conclude that it is essentially constant.
\end{proof}

\section{$T$-algebraic representations of $S$}

This section introduces a slight reformulation of \cite[\S4]{BF-MSR}.
The main differences are that $\Gamma<S$ below is not assumed to be a lattice here and we relay on Definition~\ref{def:trivalg} rather then on a weak mixing assumption.
Throughout this section we fix a locally compact second countable group $S$ and a closed subgroup $\Gamma<S$.
We endow $S$ with its Haar measure and regard it as a Lebesgue space.
We also fix a field $k$ endowed with a non-trivial absolute value which is separable and complete (as a metric space),
and a $k$-algebraic group $\bf{G}$.
We denote by $G$ the Polish group ${\bf{G}}(k)$.
Finally, we fix a continuous homomorphism $\delta:\Gamma\to G$.

\begin{defn}
Given all the data above,
for a closed subgroup $T<S$ a $T$-algebraic representation of $S$
consists of the following data:
\begin{itemize}
\item a $k$-algebraic group ${\bf L}$,
\item a $k$-$({\bf G}\times {\bf L})$-algebraic variety ${\bf V}$, regarded as a left $\bf G$, right $\bf L$ space,
on which the $\bf L$-action is faithful,
\item a continuous homomorphism $d:T\to {\bf L}(k)$ with a Zariski dense image,
\item an associated algebraic representation of the $\Gamma\times T$-space $S$ on $\bf V$,
where $\Gamma$ acts on the left, $T$ acts on the right of the Lebesgue space $S$.
That is, a Haar a.e defined measurable map $\phi:S \to {\bf V}(k)$ such that for almost every $s\in S$, every $\gamma\in \Gamma$ and every $t\in T$,
\[ \phi(\gamma s t)=\delta(\gamma) \phi(s)d(t). \]
\end{itemize}
We abbreviate the notation by saying that ${\bf V}$ is a $T$-algebraic representation of $S$,
denoting the extra data by ${\bf L}_{\bf V}, d_{\bf V}$ and $\phi_{\bf V}$.
Given another $T$-algebraic representation ${\bf U}$ we let ${\bf L}_{{\bf U},{\bf V}}<{\bf L}_{\bf U}\times {\bf L}_{\bf V}$ be the Zariski closure of the image of $d_{\bf U}\times d_{\bf V}:T\to {\bf L}_{\bf U}\times {\bf L}_{\bf V}$. ${\bf L}_{{\bf U},{\bf V}}$ acts on ${\bf U}$ and ${\bf V}$ via its projections to ${\bf L}_{\bf U}$ and ${\bf L}_{\bf V}$ correspondingly.
A morphism of $T$-algebraic representations of $S$ from the $T$-algebraic representation ${\bf U}$ to the $T$-algebraic representation ${\bf V}$ is
a $k$-algebraic map $\psi:{\bf U}\to {\bf V}$ which is ${\bf G}\times{\bf L}_{{\bf U},{\bf V}}$-equivariant,
and such that $\phi_{\bf V}$ agrees a.e with $\psi\circ \phi_{\bf U}$.
\end{defn}

Fix a $k$-subgroup ${\bf H}<{\bf G}$ and denote ${\bf N}=N_{\bf G}({\bf H})$.
This is again a $k$-subgroup.
Any element $n\in {\bf N}$ gives a ${\bf G}$-automorphism of ${\bf G}/{\bf H}$ by
$g{\bf H}\mapsto gn^{-1}{\bf H}$.
It is easy to see that the homomorphism ${\bf N} \to \Aut_{\bf G}({\bf G}/{\bf H})$ thus obtained is onto and its kernel is ${\bf H}$.
Under the obtained identification ${\bf N}/{\bf H} \simeq \Aut_{\bf G}({\bf G}/{\bf H})$,
the $k$-points of the $k$-group ${\bf N}/{\bf H}$ are identified with the $k$-${\bf G}$-automorphisms of ${\bf G}/{\bf H}$.

\begin{defn}
A $T$-algebraic representation of $S$ is said to be a coset $T$-algebraic representation if it is isomorphic as a $T$-algebraic representation to
${\bf G}/{\bf H}$ for some $k$-algebraic subgroup ${\bf H}<{\bf G}$,
and ${\bf L}$ corresponds to a $k$-subgroup of $N_{\bf G}({\bf H})/{\bf H}$ which acts on ${\bf G}/{\bf H}$ as described above.
\end{defn}

It is clear that the collection of $T$-algebraic representations of $S$ and their morphisms form a category.

\begin{theorem} \label{thm:initialproper}
Assume the $T$ space $S/\Gamma$ has only trivial algebraic representations.
Then the category of $T$-algebraic representations of $S$ has an initial object
and this initial object is a coset $T$-algebraic representation.
\end{theorem}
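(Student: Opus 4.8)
The plan is to mirror, almost verbatim, the strategy used to prove Theorem~\ref{thm:initial proper} and Theorem~\ref{thm:Tinitial proper}, replacing the Polish/proper-representation machinery by its algebraic counterpart. First I would record the algebraic analogues of the three basic lemmas. The \emph{uniqueness} lemma: if $\phi_{\bf U}:S\to{\bf U}$ is a transitive (i.e.\ coset) $T$-algebraic representation and $\phi_{\bf V}:S\to{\bf V}$ is any $T$-algebraic representation, then a morphism ${\bf U}\to{\bf V}$ is unique — because two ${\bf G}\times{\bf L}_{{\bf U},{\bf V}}$-equivariant $k$-maps out of ${\bf G}/{\bf H}$ that agree a.e.\ after precomposition with $\phi_{\bf U}$ must agree on a Zariski-closed ${\bf G}$-invariant subset meeting the (Zariski-dense) essential image, hence everywhere. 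The \emph{transitivity} lemma: every $T$-algebraic representation ${\bf V}$ receives a morphism from a coset one — here one uses Proposition~\ref{prop:AG-ergodic}, applied to $S$ viewed as a $\Gamma\times T$-space with the homomorphism $\delta\times d_{\bf V}:\Gamma\times T\to({\bf G}\times{\bf L}_{\bf V})(k)$, noting that $\Gamma\times T$ acts ergodically on $S$ (this ergodicity follows as in the proof of Lemma~\ref{lem:Ttransproper}: a nontrivial invariant $\{0,1\}$-valued map would descend to a nonconstant algebraic representation of the $T$-space $S/\Gamma$, contradicting the hypothesis). One then has to check, exactly as in Lemma~\ref{lem:Ttransproper}, that the coset variety so produced is in fact ${\bf G}$-transitive (not merely ${\bf G}\times{\bf L}$-transitive): composing with ${\bf G}/{\bf H}\to{\bf G}\backslash({\bf G}/{\bf H})$ gives a $\Gamma$-invariant, hence $S/\Gamma$-defined, algebraic representation of the $T$-space $S/\Gamma$ for the induced ${\bf L}$-quotient, which is trivial by hypothesis, forcing the ${\bf L}$-quotient to be a point; then one passes to the faithful quotient of ${\bf L}$ acting on ${\bf U}$ to restore faithfulness of the ${\bf L}$-action. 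Finally the \emph{countable products} lemma: given countably many $T$-algebraic representations ${\bf V}_i$, form $\prod{\bf V}_i$ with ${\bf L}$ the Zariski closure of the diagonal image of $T$ in $\prod{\bf L}_i$; in the transitive subcategory, apply the transitivity lemma to the product to land back among coset representations.

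With these in hand, the argument of Theorem~\ref{thm:initial proper} goes through: by the uniqueness and transitivity lemmas it suffices to find an initial object in the subcategory of coset $T$-algebraic representations; the uniqueness lemma makes the isomorphism classes a poset, the product lemma gives countable minima, so by Zorn it remains to show every chain has a lower bound, equivalently a countable cofinal subchain. For the cofinality step one argues as in the proof of Theorem~\ref{thm:Tinitial proper} via Lemma~\ref{lem:cofinal}, but in the algebraic category: given a chain of coset $T$-algebraic representations ${\bf G}/{\bf H}_\alpha$ with non-isomorphism morphisms, one uses that ${\bf G}$ — or more precisely the algebraic subgroups ${\bf H}_\alpha$ of the fixed $k$-algebraic group ${\bf G}$ — satisfies a Noetherian-type condition. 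Indeed, a strictly descending morphism ${\bf G}/{\bf H}_\alpha\to{\bf G}/{\bf H}_\beta$ forces a strict inclusion $\dim{\bf H}_\beta<\dim{\bf H}_\alpha$ or (at equal dimension) a strictly increasing number of connected components; since dimension is bounded and component count is finite at each stage, any such chain is finite, so in particular it has a greatest element, contradicting the assumption that $A$ has no greatest element — hence a chain with no greatest element cannot exist, and the poset of coset representations has a minimal, therefore least (using countable minima), element.

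The main obstacle I expect is the ${\bf G}$-transitivity refinement inside the algebraic analogue of Lemma~\ref{lem:Ttransproper}: Proposition~\ref{prop:AG-ergodic} only yields ${\bf G}\times{\bf L}$-transitivity, and one must genuinely use the ``only trivial algebraic representations'' hypothesis for the $T$-space $S/\Gamma$ — together with the bookkeeping of replacing ${\bf L}$ by suitable algebraic quotients so that the ${\bf L}$-action stays faithful while the ${\bf G}$-action becomes transitive — to descend to a \emph{coset} $T$-algebraic representation in the precise sense defined above. The cofinality/Noetherian step, by contrast, is cleaner in the algebraic setting than its topological counterpart, since $\dim$ and component-count give an honest well-foundedness statement rather than requiring the inverse-limit argument of Lemma~\ref{lem:cofinal}; one could alternatively phrase it through an inverse limit of coset varieties to stay parallel to the earlier proof, but the dimension argument is more direct. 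Everything else is a routine transcription of \S\ref{sec:properreps} with ``Polish/proper'' replaced by ``$k$-algebraic/coset'' and Proposition~\ref{prop:AG-ergodic} in the role of Lemma~\ref{lem:transproper}.
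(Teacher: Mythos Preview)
Your proposal is correct and matches the paper's approach: the paper's own proof simply says ``the same as \cite[Theorem~4.3]{BF-MSR},'' noting that the only change is that the \emph{only trivial algebraic representations} hypothesis replaces the weak-mixing assumption used there (via \cite[Proposition~3.3]{BF-MSR}) precisely in the algebraic analogue of Lemma~\ref{lem:Ttransproper}, which is exactly the step you flag as the main obstacle. Your Noetherian/dimension argument for the chain-cofinality step is the standard and correct one in the algebraic category and is indeed cleaner than the compact-group argument of Lemma~\ref{lem:cofinal}.
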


\begin{proof}
The proof is the same as the proof of \cite[Theorem~4.3]{BF-MSR}.
The only difference is that the assumption that $S/\Gamma$ has only trivial algebraic representations
is made their only implicitly, via \cite[Proposition~3.3]{BF-MSR} (which is used in \cite[Lemma~4.4]{BF-MSR}) using the weakly mixing assumption appearing in
the conditions of \cite[Theorem~4.3]{BF-MSR}.
\end{proof}

It turns out that an initial object in the category of $T$-algebraic representations of $S$ extends naturally to an $N$-algebraic representation of $S$, where $N$ denotes the normalizer of $T$ in $S$.

\begin{prop} \label{prop:algnormalizer}
Assume the action of $T$ on $S/\Gamma$ has only trivial algebraic representations and let ${\bf G}/{\bf H}$,
$\theta:T \to {\bf L}(k)<N_{\bf G}({\bf H})/{\bf H}(k)$ and $\phi:S \to {\bf G}/{\bf H}(k)$ be an initial object
in the category of $T$-algebraic representations of $S$, as guaranteed by Theorem~\ref{thm:initialproper}.
Then the map $\theta:T \to N_{\bf G}({\bf H})/{\bf H}(k)$ extends to the normalizer of $T$ in $S$, $N=N_S(T)$,
and the map $\phi$ could be seen as an $N$-algebraic representations of $S$.
More precisely, there exists a continuous homomorphism $\bar{\theta}:N\to N_{\bf G}({\bf H})/{\bf H}(k)$ satisfying $\bar{\theta}|_T=\theta$
such that, denoting by $\bar{\bf L}$ the Zariski closure of $\bar{\theta}$ in $N_{\bf G}({\bf H})/{\bf H}$, the data
${\bf G}/{\bf H}$,
$\bar{\theta}:N \to \bar{\bf L}(k)<N_{\bf G}({\bf H})/{\bf H}(k)$ and $\phi:S \to ({\bf G}/{\bf H})(k)$ forms an $N$-algebraic coset representation.

In particular, if $T'<S$ is a closed group commuting with $T$ then $\theta'=\bar{theta}|_{T'}:T' \to N_{\bf G}({\bf H})/{\bf H}(k)$
is a continuous homomorphism with respect to which $\phi$ is $T'$-equivariant.
\end{prop}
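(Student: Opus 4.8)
\textit{Proof proposal.} The plan is to transcribe the proof of Proposition~\ref{prop:Tyonedacomp} into the algebraic setting, using Theorem~\ref{thm:initialproper} in place of the existence statement for initial $T$-proper representations; this is also the argument of \cite[Theorem~4.6]{BF-MSR}.

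Fix $n\in N=N_S(T)$. First I would twist the given initial object: set $d'=\theta\circ\inn(n):T\to{\bf L}(k)$, where $\inn(n):T\to T$, $t\mapsto ntn^{-1}$, makes sense because $n$ normalizes $T$, and set $\phi'=\phi\circ\rho(n):S\to({\bf G}/{\bf H})(k)$, where $\rho(n):S\to S$ is right translation $s\mapsto sn^{-1}$. Since $\inn(n)$ is an automorphism of $T$ the image of $d'$ is still Zariski dense in ${\bf L}$, and ${\bf L}$ still acts faithfully on ${\bf G}/{\bf H}$, so to see that $({\bf L},{\bf G}/{\bf H},d',\phi')$ is again a $T$-algebraic representation of $S$ it remains only to check equivariance, the same computation as in Proposition~\ref{prop:Tyonedacomp}:
\[ \phi'(\gamma s t)=\phi(\gamma s t n^{-1})=\phi\bigl(\gamma sn^{-1}(ntn^{-1})\bigr)=\delta(\gamma)\,\phi'(s)\,d'(t). \]
As $({\bf G}/{\bf H},\theta,\phi)$ is initial there is a unique morphism from it to $({\bf G}/{\bf H},d',\phi')$; being a ${\bf G}$-equivariant $k$-morphism ${\bf G}/{\bf H}\to{\bf G}/{\bf H}$, it is given, under the identification $\Aut_{\bf G}({\bf G}/{\bf H})\simeq{\bf N}/{\bf H}$ (${\bf N}=N_{\bf G}({\bf H})$) recorded earlier in this section, by a $k$-point $\bar\theta(n)\in({\bf N}/{\bf H})(k)$ characterized by $\phi'=\bar\theta(n)\circ\phi$ a.e. Its $k$-algebraicity is automatic here, the morphism having been produced inside the category of $T$-algebraic representations of $S$.

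Next I would check that $n\mapsto\bar\theta(n)$ is a homomorphism $N\to({\bf N}/{\bf H})(k)$ extending $\theta$. For $n_1,n_2\in N$ both $\bar\theta(n_1n_2)$ and the appropriate composite of $\bar\theta(n_1)$ and $\bar\theta(n_2)$ are the unique morphism from the initial object to the representation twisted by $n_1n_2$, so they coincide; and for $n=t\in T$ the automorphism $\theta(t)$ itself serves as that morphism (this uses the ${\bf L}_{{\bf U},{\bf V}}$-equivariance built into the notion of morphism, which is exactly the conjugation identity making $\bar\theta$ a homomorphism), so $\bar\theta|_T=\theta$ by uniqueness --- just as in Proposition~\ref{prop:Tyonedacomp}. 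Continuity of $\bar\theta$ follows as in the proof of \cite[Theorem~4.7]{BF-products}: $\bar\theta$ is measurable by the argument of Proposition~\ref{prop:yonedacomp}, hence continuous by \cite[Lemma~2.1]{Rosendal}. Taking $\bar{\bf L}$ to be the Zariski closure of the image of $\bar\theta$, the identity $\phi(\gamma s n)=\delta(\gamma)\,\phi(s)\,\bar\theta(n)$ (which follows from $\phi'=\bar\theta(n)\circ\phi$ together with the left $\Gamma$-equivariance of $\phi$, after the substitution $s\mapsto sn$) says exactly that $({\bf G}/{\bf H},\bar\theta:N\to\bar{\bf L}(k),\phi)$ is an $N$-algebraic coset representation of $S$.

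For the last assertion, if $T'<S$ commutes with $T$ then every element of $T'$ normalizes $T$, so $T'\subset N$ and $\theta'=\bar\theta|_{T'}$ is defined and continuous; restricting the $N$-equivariance of $\phi$ just established to $T'$ gives $\phi(st)=\phi(s)\theta'(t)$ for a.e.\ $s$ and every $t\in T'$, which is the asserted $T'$-equivariance. I expect the only points needing care to be the two flagged above --- that initiality furnishes a genuine $k$-point of ${\bf N}/{\bf H}$ (handled by the $\Aut_{\bf G}({\bf G}/{\bf H})\simeq{\bf N}/{\bf H}$ dictionary of this section) and the continuity of $\bar\theta$ (handled by the measurability-then-continuity argument cited above); the rest is a faithful transcription of the proof of Proposition~\ref{prop:Tyonedacomp}.
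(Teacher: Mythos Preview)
Your proposal is correct and follows exactly the approach the paper itself indicates: the paper's proof of Proposition~\ref{prop:algnormalizer} simply says ``The same as the proof of \cite[Theorem~4.6]{BF-MSR}, see also the proof of Proposition~\ref{prop:Tyonedacomp},'' and your argument is a faithful transcription of the latter into the algebraic setting, with the two delicate points (that the resulting ${\bf G}$-automorphism of ${\bf G}/{\bf H}$ is a $k$-point of ${\bf N}/{\bf H}$, and continuity of $\bar\theta$) handled just as the paper does elsewhere.
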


\begin{proof}
The same as the proof of \cite[Theorem~4.6]{BF-MSR}, see also the proof of Proposition~\ref{prop:Tyonedacomp}.
\end{proof}

Next, in an analogy with \S\ref{sec:trivproper}, 
we allow ourselves to vary $k$, ${\bf G}$ and $\delta$.

\begin{defn} \label{def:trivalg2}
Fix an lcsc group $S$ and a closed subgroup $\Gamma<S$.
Let $T<S$ be a closed subgroup.
We will say that $S$ has only trivial $T$-algebraic representations if 
for any complete separable valued field $k$, a $k$-algebraic group ${\bf G}$ and a continuous homomorphism $\delta:\Gamma\to {\bf G}(k)$,
there exit a $k$-algebraic subgroup ${\bf H}<{\bf G}$ normalized by $\delta(\Gamma)$,
a continuous homomorphism $\theta:S\to N_{\bf G}({\bf H})/{\bf H}(k)$ and $n\in N_{\bf G}({\bf H})(k)$ such that the composition of $\inn(n)\circ\delta:\Gamma\to N_{\bf G}({\bf H})(k)$ with the
quotient $N_{\bf G}({\bf H})(k)\to N_{\bf G}({\bf H})/{\bf H}(k)$ equals $\theta|_\Gamma$
and there exists an a.e defined measurable $\phi:S\to {\bf G}/{\bf H}(k)$ which is left $\Gamma$-equivariant via $\delta$ and right $S$-equivariant via $\theta$
such that $\phi$ together with $\theta|_T$
form an initial object in corresponding category of 
$T$-algebraic representations of $S$.
\end{defn}

\begin{theorem} \label{thm:Tnoalg}
Let $S$ be a locally compact second countable group and $\Gamma<S$ a closed subgroup.
Assume that there exists a topologically generating (finite or infinite) sequence of abelian subgroup $T_0,T_1,\ldots <S$ such that for every $i>0$,
$T_i$ commutes with $T_{i-1}$ and for every $i\geq 0$, $T_i$ acts ergodically on $S/\Gamma$.
Assume also that $S/N$ has trivial abelianization, where $N$ is the closed normal subgroup generated by $\Gamma$.
Let $T=T_0$.
Then $S$ has only trivial $T$-algebraic representations.
\end{theorem}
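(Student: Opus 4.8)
The plan is to rerun the proof of Theorem~\ref{thm:Tnoproper} essentially word for word, but with the category of $T$-proper representations of $S$ replaced throughout by the category of $T$-algebraic representations of $S$. Concretely: every appeal to Theorem~\ref{thm:Tinitial proper} is replaced by one to Theorem~\ref{thm:initialproper}, every appeal to Proposition~\ref{prop:Tyonedacomp} by one to Proposition~\ref{prop:algnormalizer}, the compact stabiliser $K<G$ by a $k$-subgroup ${\bf H}<{\bf G}$ with ${\bf N}=N_{\bf G}({\bf H})$, and the homogeneous space $G/K$ by the coset variety ${\bf G}/{\bf H}$; since $(N_{\bf G}({\bf H})/{\bf H})(k)$ is again a Polish group, Lemma~\ref{lem:extension} can still be invoked in the last step exactly as before. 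The one ingredient that is new relative to \S\ref{sec:trivproper} is the assertion that for every $i\ge 0$ the $T_i$-space $S/\Gamma$ has only trivial \emph{algebraic} representations in the sense of Definition~\ref{def:trivalg}; this I would get by combining Theorem~\ref{thm:noproper} — whose hypotheses coincide with those of Theorem~\ref{thm:Tnoalg} and which yields that the $T_i$-space $S/\Gamma$ has only trivial proper representations — with the final Proposition of \S\ref{sec:AREA}, using that each $T_i$ is abelian. This makes Theorem~\ref{thm:initialproper} and Proposition~\ref{prop:algnormalizer} applicable to each $T_i$ below.

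First I would apply Theorem~\ref{thm:initialproper} with $T=T_0$ to obtain an initial object in the category of $T_0$-algebraic representations of $S$, which is a coset $T_0$-algebraic representation. Picking a $k$-point $w_0$ in the support of the pushforward of Haar measure of $S$ under the map of this initial object and re-coordinatising the coset variety so that $w_0$ becomes the trivial coset — equivalently, taking ${\bf H}$ to be the ${\bf G}$-stabiliser of $w_0$ — I may write the initial object as $\phi:S\to({\bf G}/{\bf H})(k)$ together with $\theta_0:T_0\to{\bf L}_0(k)<(N_{\bf G}({\bf H})/{\bf H})(k)$. Setting $T'=T_1$ in Proposition~\ref{prop:algnormalizer} produces a continuous homomorphism $\theta_1:T_1\to(N_{\bf G}({\bf H})/{\bf H})(k)$ for which $\phi$ is $T_1$-equivariant, so $\phi$ is also a $T_1$-algebraic representation of $S$. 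The bootstrap step of the proof of Theorem~\ref{thm:Tnoproper} then transfers: if $\phi_1:S\to({\bf G}/{\bf H}_1)(k)$ is an initial $T_1$-algebraic representation, initiality gives a ${\bf G}$-equivariant $k$-morphism $\alpha$ from $\phi_1$ to $\phi$, and applying Proposition~\ref{prop:algnormalizer} once more with the roles of $T_0$ and $T_1$ interchanged gives a ${\bf G}$-equivariant $k$-morphism $\beta$ from $\phi$ to $\phi_1$; the compositions $\alpha\circ\beta$ and $\beta\circ\alpha$ are then ${\bf G}$-equivariant $k$-endomorphisms of coset varieties agreeing with the identity on the essential images of $\phi$ and $\phi_1$ respectively, hence equal the identity since $N_{\bf G}({\bf H})/{\bf H}$ acts freely on ${\bf G}/{\bf H}$. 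So $\phi$ is an initial object for $T_1$ as well, and iterating this I get, for each $i\ge 0$, a continuous homomorphism $\theta_i:T_i\to(N_{\bf G}({\bf H})/{\bf H})(k)$ with respect to which $\phi$ is $T_i$-equivariant.

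To conclude, I would compose $\phi$ with the quotient ${\bf G}/{\bf H}\to{\bf G}/{\bf N}$, whose fibres are exactly the orbits of the right $N_{\bf G}({\bf H})/{\bf H}$-action. The resulting map $S\to({\bf G}/{\bf N})(k)$ is right $T_i$-invariant for every $i$, hence right $S$-invariant since the $T_i$ topologically generate $S$, hence essentially constant; and since $w_0$, the trivial coset, lies in the support of $\phi$, this constant value is the trivial coset of ${\bf G}/{\bf N}$, so the support of $\phi$ is contained in ${\bf N}/{\bf H}\subset{\bf G}/{\bf H}$. That support is $\delta(\Gamma)$-invariant by left $\Gamma$-equivariance, and its setwise ${\bf G}$-stabiliser is ${\bf N}$, so $\delta(\Gamma)$ normalises ${\bf H}$. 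Restricting codomains, $\phi:S\to(N_{\bf G}({\bf H})/{\bf H})(k)$ is then left $\Gamma$-equivariant via the induced homomorphism $\delta':\Gamma\to(N_{\bf G}({\bf H})/{\bf H})(k)$ and right $T_i$-equivariant via $\theta_i$ for every $i$, so Lemma~\ref{lem:extension} yields a continuous homomorphism $\theta:S\to(N_{\bf G}({\bf H})/{\bf H})(k)$ extending all the $\theta_i$, under which $\phi$ is right $S$-equivariant, together with an element $g\in(N_{\bf G}({\bf H})/{\bf H})(k)$ satisfying $\delta'=\inn(g)\circ\theta|_\Gamma$. Choosing $n\in N_{\bf G}({\bf H})(k)$ representing $g^{-1}$ then provides exactly the data ${\bf H}$, $\theta$, $n$ and $\phi$ required by Definition~\ref{def:trivalg2}.

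I do not expect a genuine obstacle: the whole argument is parallel to \S\ref{sec:trivproper}, and all the real analytic and algebro-geometric work is already packaged into Theorem~\ref{thm:initialproper}, Proposition~\ref{prop:algnormalizer} and Lemma~\ref{lem:extension}. The points needing the most care are the coordinatisation making the trivial coset of ${\bf G}/{\bf H}$ lie in the support of $\phi$, which is what lets the ${\bf G}/{\bf N}$-argument pin $\phi$ down to ${\bf N}/{\bf H}$ and hence force $\delta(\Gamma)$ to normalise ${\bf H}$; the verification that the rigidity of ${\bf G}$-equivariant $k$-morphisms of coset varieties, together with freeness of the $N_{\bf G}({\bf H})/{\bf H}$-action, correctly replaces the ``two $G$-maps of a $G$-transitive space agreeing somewhere'' step of the proof of Theorem~\ref{thm:Tnoproper}; and the minor lifting of $g^{-1}$ to an element of $N_{\bf G}({\bf H})(k)$ needed to match Definition~\ref{def:trivalg2} on the nose.
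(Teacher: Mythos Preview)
Your proposal is correct and takes essentially the same approach as the paper: the paper's own proof is the single sentence ``The proof is essentially the same as the proof of Theorem~\ref{thm:Tnoproper} and we will not repeat it,'' and your write-up is precisely the detailed translation of that proof to the algebraic setting, with Theorem~\ref{thm:initialproper} and Proposition~\ref{prop:algnormalizer} replacing their proper-representation analogues and with Theorem~\ref{thm:noproper} combined with the last Proposition of \S\ref{sec:AREA} supplying the ``only trivial algebraic representations'' hypothesis for each $T_i$. The one point you flag --- lifting $g^{-1}\in (N_{\bf G}({\bf H})/{\bf H})(k)$ to some $n\in N_{\bf G}({\bf H})(k)$ --- is indeed the only place where the algebraic version is not a verbatim transcription, and the paper does not spell this out either; in the application (proof of Theorem~\ref{thm:main}) this is harmless since the only case in which $n$ is actually used is ${\bf H}=\{e\}$, where the lift is trivial.
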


\begin{proof}
The proof is the essentially the same as the proof of Theorem~\ref{thm:Tnoproper}
and we will not repeat it.
\end{proof}

\section{Proof of Theorem~\ref{thm:main}} \label{sec:main}

Let $\Gamma<S$ be a closed subgroup,
$k$ be a local field and ${\bf G}$ a connected adjoint $k$-simple algebraic group.
Let $G={\bf G}(k)$.
Let $\delta:\Gamma \to G$
be a continuous homomorphism
which is unbounded and Zariski dense in $G$.
We will show that there exists a unique continuous homomorphism $\rho:S\to G$
such that $\delta=\rho|_{\Gamma}$.

We first argue that such an extension, if exists, is unique.
Assume by negation that $\rho_1,\rho_2:S\to G$ are two distinct continuous homomorphism with $\rho_1|_\Gamma=\rho_2|_\Gamma=\delta$.
As $\delta(\Gamma)<G$ is Zariski dense, we get that the Zariski closure of the image of the map $(\rho_1,\rho_2):S\to G\times G$ given by $s\mapsto (\rho_1(s),\rho_2(s))$
strictly contains the diagonal $\Delta{\bf G}<{\bf G}\times {\bf G}$. 
By the maximality of the diagonal (as ${\bf G}$ is $k$-simple) it follows that the image of $(\rho_1,\rho_2)$ is Zariski dense in ${\bf G}\times {\bf G}$.
Composing $(\rho_1,\rho_2)$ with $G\times G\to ({\bf G}\times {\bf G})/\Delta{\bf G}(k)$
we get a map $S\to ({\bf G}\times {\bf G})/\Delta{\bf G}(k)$ which is right $\Gamma$-invariant and $S$-equivariant via $(\rho_1,\rho_2)$.
This map factors via $S/\Gamma$ and we may view the resulting $S$-equivariant map $\psi:S/\Gamma\to ({\bf G}\times {\bf G})/\Delta{\bf G}(k)$
as an algebraic representation of the $S$-space $S/\Gamma$.
However, by Theorem~\ref{thm:noproper}, the $S$-space $S/\Gamma$ has only trivial representation (even as a $T_0$-space),
thus $\psi$ is essentially constant and its essential image is $(\rho_1,\rho_2)(\Gamma)$-invariant.
But the latter group is Zariski dense in ${\bf G}\times {\bf G}$, thus this point is ${\bf G}\times {\bf G}$-invariant.
This is an absurd, as there are no ${\bf G}\times {\bf G}$-invariant points in the homogeneous space $({\bf G}\times {\bf G})/\Delta{\bf G}$.
We conclude that any extension of $\delta$ to $S$, if exists is indeed unique.


We will now show the existence of such an extension.
We will proceed by negation, assuming $\delta$ does not extend to $S$.
We let $T=T_0$.
By Theorem~\ref{thm:Tnoalg} $S$ has only trivial $T$-algebraic representations.
That is, 
there exit a $k$-algebraic subgroup ${\bf H}<{\bf G}$ normalized by $\delta(\Gamma)$,
a continuous homomorphism $\theta:S\to N_{\bf G}({\bf H})/{\bf H}(k)$ and $n\in N_{\bf G}({\bf H})(k)$ such that the composition of $\inn(n)\circ\delta:\Gamma\to N_{\bf G}({\bf H})(k)$ with the
quotient $N_{\bf G}({\bf H})(k)\to N_{\bf G}({\bf H})/{\bf H}(k)$ equals $\theta|_\Gamma$
and there exists an a.e defined measurable $\phi:S\to {\bf G}/{\bf H}(k)$ which is left $\Gamma$-equivariant via $\delta$ and right $S$-equivariant via $\theta$
such that $\phi$ together with $\theta|_T$
form an initial object in corresponding category of 
$T$-algebraic representations of $S$.

As ${\bf H}$ is normalized by the Zariski dense subgroup $\delta(\Gamma)$, we conclude that ${\bf H}$ is normal in the $k$-simple group ${\bf G}$.
Therefore, either ${\bf H}=\{e\}$ or ${\bf H}={\bf G}$.
In the first case $\delta$ extends to $S$ by $\inn(n)^{-1}\circ\theta$, contradicting our assumption.
We therefor have ${\bf H}={\bf G}$.
It follows that every $T$-algebraic representation is essentially constant and its essential value is ${\bf G}$-invariant.
In particular, by taking $\theta$ to be the trivial homomorphism to the trivial group,
We get that every algebraic representation of the $\Gamma$-space $S/T$ is essentially constant and its essential value is ${\bf G}$-invariant.

Note that the action of $\Gamma$ on $S/T$ is amenable, as $T$ is commutative hence amenable.
Setting $Y=S/T$ and $R=\Gamma$ in \cite[Theorem~6.1]{BDL} we conclude having a proper representation of the $\Gamma$-space $S/T$,
corresponding to $\delta:\Gamma\to G$.
Taking again $\theta$ to be the trivial homomorphism to the trivial group, we view this representation as a $T$-proper representation of $S$.
By Theorem~\ref{thm:Tnoproper} we know that $S$ has only trivial $T$-proper representations, that is 
there exit a compact subgroup $K<G$ normalized by $\delta(\Gamma)$,
a continuous homomorphism $\theta:S\to N_G(K)/K$ and $n\in N_G(K)$ such that the composition of $\inn(n)\circ\delta:\Gamma\to N_G(K)$ with the
quotient $N_G(K)\to N_G(K)/K$ equals $\theta|_\Gamma$
and there exists an a.e defined measurable $\phi:S\to G/K$ which is left $\Gamma$-equivariant via $\delta$ and right $S$-equivariant via $\theta$
such that $\phi$ together with $\theta|_T$
form an initial object in corresponding category of 
$T$-proper representations of $S$.

We let ${\bf H}$ be the Zariski closure of $K$ in ${\bf G}$.
The composition of $\phi$ with $G\to {\bf G}/{\bf H}(k)$ could be seen as a $T$-algebraic representation of $S$,
and from the fact that 
every $T$-algebraic representation is essentially constant and its essential value is ${\bf G}$-invariant
we conclude that ${\bf H}={\bf G}$, that is $K$ is Zariski dense in ${\bf G}$.
By Lemma~\ref{lem:NL} below we have that $N_G(K)$ is compact.
But $\delta(\Gamma)$ normalizes $K$ and it is assumed unbounded.
This gives the desired contradiction, and the proof follows.
We are only left to verify the following.

\begin{lemma} \label{lem:NL}
Let $k$ be a local filed.
Let ${\bf G}$ be a $k$-algebraic group and denote $G={\bf G}(k)$.
Let $K<G$ be a compact subgroup.
Then $N_G(K)/Z_G(K)$ is compact.
In particular, if ${\bf G}$ is a semisimple group and $K$ is Zariski dense then $N_G(K)$ is compact.
\end{lemma}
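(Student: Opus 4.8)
The plan is to reduce the first statement to the fact that the automorphism group of a compact group, acting on a suitable finite-dimensional space, is forced to land in a compact subgroup once we quotient by the elements acting trivially. First I would fix a compact subgroup $K<G={\bf G}(k)$. Since ${\bf G}$ is a $k$-algebraic group, $G$ admits a faithful continuous linear representation $G\hookrightarrow \GL_m(k)$ for some $m$, realized as $k$-points of a $k$-embedding ${\bf G}\hookrightarrow \GL_m$; I may therefore assume $G<\GL_m(k)$. The compact group $K<\GL_m(k)$ preserves a norm on $k^m$ (average the standard norm over $K$ using Haar measure, or invoke compactness directly), so after conjugating in $\GL_m(k)$ we may assume $K<\OO_m$ in the archimedean case, respectively $K<\GL_m(\mathcal{O})$ in the non-archimedean case, where $\mathcal{O}$ is the ring of integers of $k$. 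Conjugation by an element $n\in N_G(K)$ induces an automorphism of $K$; the key point is that $N_G(K)/Z_G(K)$ embeds in $\Aut(K)$, and this automorphism group is compact because $K$ is a compact (Lie, or profinite) group. More concretely, I would argue that the adjoint-type action of $N_G(K)$ on the finite-dimensional $k$-vector space spanned by $K$ inside $M_m(k)$ preserves the lattice/compact set generated by $K$, so the image of $N_G(K)$ in $\GL$ of that space is bounded; its kernel is exactly $Z_G(K)$, giving that $N_G(K)/Z_G(K)$ is relatively compact, and it is closed, hence compact.

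For the ``in particular'' clause, suppose ${\bf G}$ is semisimple and $K$ is Zariski dense in ${\bf G}$. Then $Z_G(K)$ centralizes a Zariski dense subgroup, hence $Z_G(K)=Z_{\bf G}({\bf G})(k)$, which is finite since ${\bf G}$ is semisimple (trivial if ${\bf G}$ is adjoint). Combined with the compactness of $N_G(K)/Z_G(K)$ just established, this shows $N_G(K)$ is compact, being an extension of a compact group by a finite group.

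The main obstacle I anticipate is making the embedding $N_G(K)/Z_G(K)\hookrightarrow \Aut(K)$ genuinely yield compactness in the topological (not just abstract) sense, uniformly over both archimedean and non-archimedean $k$: one must check that the conjugation action of $N_G(K)$ on the bounded set $K$ is continuous and proper onto its image, so that the quotient topology agrees with the subspace topology in $\Aut(K)$ and relative compactness can be upgraded to compactness. The cleanest route is the linear-algebra one sketched above — work inside $\End(k^m)$, use that $K$ spans a $k$-subspace $W$ on which $N_G(K)$ acts by $k$-linear maps preserving the compact set $\conv(K)$ (resp.\ the $\mathcal{O}$-lattice generated by $K$), conclude the image of $N_G(K)$ in $\GL(W)$ is bounded with kernel $Z_G(K)$, and invoke that a Hausdorff topological group which is the image of a proper continuous homomorphism from $N_G(K)$ and is bounded must be relatively compact; closedness of $N_G(K)/Z_G(K)$ inside $\GL(W)$ then finishes it.
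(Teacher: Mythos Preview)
Your concrete linear-algebra route---let $N_G(K)$ act by conjugation on the $k$-span $W$ of $K$ inside $M_m(k)$, note that this action preserves the compact spanning set $K$ and hence a Minkowski-type norm on $W$, and has kernel exactly $Z_G(K)$---is correct and is precisely the paper's argument. The paper just packages it more generally: it proves that for any affine $k$-${\bf G}$-variety ${\bf V}$ and any compact $C\subset {\bf V}(k)$ one has $\Stab_G(C)/\Fix_G(C)$ compact (via the same ``balanced convex hull of $C$ gives an invariant norm'' step), and then deduces the lemma by applying this to the conjugation action of ${\bf G}$ on itself with $C=K$.

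One caution: your earlier aside that ``$\Aut(K)$ is compact because $K$ is a compact (Lie, or profinite) group'' is false---for instance $\Aut((S^1)^n)\cong\GL_n(\mathbb{Z})$ is non-compact---so that shortcut does not work and you should rely solely on the linear argument. Your instinct about the real obstacle (upgrading ``bounded image'' to ``compact quotient'') is right; the cleanest fix is to observe that $W$ coincides with the linear span of the Zariski closure ${\bf H}$ of $K$, so the conjugation map $N_{\bf G}({\bf H})\to\GL(W)$ is a morphism of $k$-algebraic groups with kernel $Z_{\bf G}({\bf H})$, that $N_G(K)$ is closed in $N_{\bf G}({\bf H})(k)$, and that $Z_G(K)=Z_{\bf G}({\bf H})(k)$; this makes the image of $N_G(K)/Z_G(K)$ closed in $\GL(W)$, hence compact. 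The ``in particular'' clause is handled exactly as you say.
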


This lemma is obtained from the following more general proposition
by applying it to the conjugation action of ${\bf G}$ on itself.

\begin{prop}
Let $k$ be a local filed.
Let ${\bf G}$ be a $k$-algebraic group and ${\bf V}$ a $k$-${\bf G}$-affine variety.
Denote $G={\bf G}(k)$ and $V={\bf V}(k)$.
Let $C\subset V$ be a compact subset.
Then the group $\Stab_G(C)/\Fix_G(C)$ is compact.
\end{prop}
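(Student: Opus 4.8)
The plan is to reduce to the case where $C$ is a single point by a compactness/Noetherian argument, and then to exploit the affine embedding together with properness of orbit maps near fixed configurations. First I would fix an embedding ${\bf V}\hookrightarrow \bbA^n$ as a $k$-closed subvariety, so that $V\subset k^n$ inherits the analytic topology and $C\subset k^n$ is a genuine compact set. The group $\Stab_G(C)$ acts on $C$, and I want to understand the kernel of this action on the level of germs. Since ${\bf V}$ is a $k$-variety and $k^n$ is Noetherian, the Zariski closure $\overline{C}^{Zar}$ is cut out by finitely many polynomials; more to the point, the ideal of functions in $k[{\bf V}]$ vanishing on $C$ is finitely generated, say by $f_1,\dots,f_m$, and these $f_j$ are a set of coordinates detecting ``being in the Zariski closure of $C$.'' An element of $G$ fixing $C$ pointwise is the same as an element fixing the reduced subscheme with coordinate ring $k[{\bf V}]/(f_1,\dots,f_m)$; this shows $\Fix_G(C)=\Fix_G(\overline{C}^{Zar})$ is Zariski closed in $G$, hence ${\bf G}(k)$-closed, and in particular the quotient $G/\Fix_G(C)$ is a Polish space on which $\Stab_G(C)/\Fix_G(C)$ acts.

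The key step is then to produce a $G$-proper action witnessing compactness. Consider the space $W=k^n$ with the affine $G$-action extending the one on ${\bf V}$ (if the $G$-action on $\bbA^n$ is not linear, replace $\bbA^n$ by a $G$-equivariant affine embedding, which exists since ${\bf V}$ is a $k$-$G$-variety; this is standard). I would next observe: the assignment $g\mapsto (gc_1,\dots,gc_N)$ for a countable dense subset $\{c_i\}\subset C$ realizes $G/\Fix_G(C)$ as a $G$-orbit inside $\prod_i W$, and I claim this action of $G$ on that orbit closure is proper. Properness here should follow from the fact that over a local field $k$, for a $k$-algebraic group ${\bf G}$ acting on an affine variety ${\bf V}$, the orbit maps $G/\Stab_G(v)\to Gv$ are homeomorphisms and orbits are locally closed (a theorem of Bernstein–Zelevinsky / Borel–Serre type), so the stabilizer of a \emph{compact} configuration, being an intersection of conjugates-of-stabilizers of points ranging over the compact set $C$, is contained in a compact set as soon as we track how far $g$ can move a fixed point of $C$ while still fixing $C$ pointwise modulo $\Fix_G(C)$.

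Concretely, here is the argument I expect to work in the end, avoiding the heaviest machinery: pick $c\in C$. The map $\Stab_G(C)/\Fix_G(C)\to C$, $g\Fix_G(C)\mapsto gc$, need not be injective, so instead run over all of $C$: the map $q:\Stab_G(C)/\Fix_G(C)\to \map(C,C)$ into the compact-open topology on continuous self-maps of $C$ is injective by construction, continuous, and its image lies in the group of homeomorphisms of $C$; I would show $q$ is a topological embedding with closed image, using that convergence $g_j c\to$ (something) for all $c$ in the compact $C$, together with $k$-algebraicity (a sequence of $k$-points of ${\bf G}$ whose action on the Zariski-dense-in-$\overline{C}^{Zar}$ set $C$ converges uniformly must, after passing to a subsequence, converge in $G$ modulo $\Fix_G(C)$ — this is where the local field hypothesis and the affine structure are used, via the valuative criterion / properness of the orbit map). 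Since $\map(C,C)$ with the compact-open topology is compact when $C$ is compact metrizable, a closed subspace is compact, giving the result. The \emph{main obstacle} is precisely the last embedding claim: one must rule out that $\Stab_G(C)/\Fix_G(C)$ ``escapes to infinity'' in $G$ while its action on $C$ stays bounded, and the clean way to do this is to invoke properness of algebraic orbit maps over local fields (equivalently, that for $v\in{\bf V}(k)$ the set $\{g\in G\mid gv\in C'\}$ is compact whenever $C'\subset V$ is compact and $Gv$ meets $C'$ only in its relatively-compact part) — which for an affine ${\bf V}$ follows from the properness statement underlying Proposition~\ref{closed orbits} applied to the $G$-space $V$ after checking $V$ is a proper $G/\Fix$-space on each orbit closure. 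I would structure the final write-up as: (1) affine equivariant embedding; (2) $\Fix_G(C)$ is closed and equals $\Fix$ of the Zariski closure; (3) the self-map representation $q$ is an embedding, using properness of orbit maps over $k$; (4) conclude by compactness of $\mathrm{Homeo}(C)\cap\map(C,C)$ being unavailable directly, instead deduce compactness of the image of $q$ from closedness plus equicontinuity coming from the common algebraic origin. The one genuinely delicate point, as flagged, is (3)/(4).
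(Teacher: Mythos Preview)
Your proposal does not close the gap you yourself flag as the ``main obstacle.'' The map $q:\Stab_G(C)/\Fix_G(C)\to \map(C,C)$ lands in a space that is not compact, so you need equicontinuity of the image; but ``equicontinuity coming from the common algebraic origin'' is a hope, not an argument, and in fact equicontinuity of this family is essentially equivalent to the compactness you are trying to prove. Your fallback, properness of orbit maps over local fields, does not help either: the assertion that $\{g\in G\mid gv\in C'\}$ is compact for compact $C'$ is simply false whenever $G_v$ is non-compact, and properness of $G/G_v\to Gv$ says nothing about properness of $G$ acting on $V$ --- the $G$-action on an affine variety is typically not proper, so Proposition~\ref{closed orbits} does not apply. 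Thus steps (3)/(4) are a genuine hole, not a technicality to be cleaned up.

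The paper's proof bypasses all of this with one linear-algebraic idea. After replacing ${\bf G}$ by the stabilizer of the Zariski closure ${\bf U}$ of $C$ and then quotienting by the pointwise fixer of ${\bf U}$ (so that $\Fix_G(C)$ becomes trivial and the representation is faithful), one takes an equivariant $k$-linear embedding $V\hookrightarrow k^n$ with the image of $C$ \emph{spanning} $k^n$ and $G\hookrightarrow\GL_n(k)$. Then $\Stab_G(C)$ preserves the balanced convex hull
\[
E=\Bigl\{\sum_{i=1}^n \alpha_i v_i \;\Big|\; v_i\in C,\ \alpha_i\in k,\ \sum|\alpha_i|\le 1\Bigr\},
\]
which is a bounded absorbing set since $C$ is compact and spanning, hence $\Stab_G(C)$ preserves the associated Minkowski norm on $k^n$. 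A closed subgroup of $\GL_n(k)$ preserving a norm is compact. This preserved norm is precisely the uniform control (equicontinuity, if you like) that your approach was missing; it comes for free from the linear structure, not from any orbit-map properness.
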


\begin{proof}
Denote by ${\bf U}$ the Zariski closure of $C$ in ${\bf V}$.
It is defined over $k$ by \cite[Theorem~AG14.4]{borel}.
Without loss of generality, using \cite[Proposition~1.7]{borel}, we may replace ${\bf G}$ by $\Stab_{\bf G}({\bf U})$ and then assume ${\bf V}={\bf U}$.
We then may further assume ${\bf G}=\Stab_{\bf G}({\bf U})/\Fix_{\bf G}({\bf U})$. We do so.
By \cite[Proposition~1.12]{borel} there exists an embedding $V\to k^n$, which we may assume spanning
and 
equivariant with respect to some representation $G\to\GL_n(k)$, which we thus may assume injective.
Denote by $D$ the image of $C$ in $V$ and by $K$ the image of $\Stab_G(C )$ in $\GL_n(k)$.
Then $K$ preserves the balanced convex hull of $D$ given by
\[ E=\left\{\sum_{i=1}^n \alpha_iv_i~|~v_i\in D,~\alpha_i\in k,~\sum_{i=1}^n |\alpha_i|\leq 1\right\}, \]
and the associated Minkowski norm on $k^n$ defined by
\[ \|x\|=\sup\{r>0~|~\forall \alpha\in k,~|\alpha|<r~\Rightarrow~\alpha x\in E\}. \]
Thus $K$ is compact.
\end{proof}


\begin{bibdiv}
\begin{biblist}

\bib{BCL}{article}{
   author = {{Bader}, U.},
author = {{Caprace}, P.-E.},
author = {{L{\'e}cureux}, J.},
    title = {On the linearity of lattices in affine buildings and ergodicity of the singular Cartan flow},
  journal = {ArXiv e-prints},
   eprint = {1608.06265},
     year = {2016},
   adsurl = {http://adsabs.harvard.edu/abs/2016arXiv160806265B},
  adsnote = {Provided by the SAO/NASA Astrophysics Data System}
}

\bib{BDL}{article}{
AUTHOR = {Uri Bader}, Author={Bruno Duchesne}, Author={Jean Lecureux},
     TITLE = {Almost algebraic actions of algebraic groups and applications
              to algebraic representations},
   JOURNAL = {Groups Geom. Dyn.},
  FJOURNAL = {Groups, Geometry, and Dynamics},
    VOLUME = {11},
      YEAR = {2017},
    NUMBER = {2},
     PAGES = {705--738},
      ISSN = {1661-7207},
   MRCLASS = {20G15 (14L30 37C40)},
  MRNUMBER = {3668057},
       DOI = {10.4171/GGD/413},
       URL = {https://doi.org/10.4171/GGD/413},
}

\bib{AREA}{article}{
   author = {Bader, Uri}, author={Furman, Alex},
    title = {Algebraic Representations of Ergodic Actions and Super-Rigidity},
  journal = {ArXiv e-prints},
   eprint = {1311.3696},
     year = {2013},
   adsurl = {http://adsabs.harvard.edu/abs/2013arXiv1311.3696B},
  adsnote = {Provided by the SAO/NASA Astrophysics Data System}
}

\bib{BF-products}{article}{
AUTHOR = {Bader, Uri}, Author={Furman, Alex},
    title = {Super-Rigidity and non-linearity for lattices in products},
  journal = {ArXiv e-prints},
   eprint = {1802.09931},
     year = {2018},
   adsurl = {http://adsabs.harvard.edu/abs/2018arXiv180209931B},
  adsnote = {Provided by the SAO/NASA Astrophysics Data System}
}

\bib{BF-MSR}{article}{
AUTHOR = {Bader, Uri}, Author={Furman, Alex},
    title = {An extension of Margulis' Super-Rigidity Theorem},
  journal = {not yet},
     year = {2018},
}

\bib{Bader-Gelander}{article}{
    AUTHOR = {Bader, Uri},
    AUTHOR = {Gelander, Tsachik},
     TITLE = {Equicontinuous actions of semisimple groups},
   JOURNAL = {Groups Geom. Dyn.},
  FJOURNAL = {Groups, Geometry, and Dynamics},
    VOLUME = {11},
      YEAR = {2017},
    NUMBER = {3},
     PAGES = {1003--1039},
      ISSN = {1661-7207},
   MRCLASS = {22E46 (22D40)},
  MRNUMBER = {3692904},
MRREVIEWER = {Luciana A. Alves},
       DOI = {10.4171/GGD/420},
       URL = {https://doi.org/10.4171/GGD/420},
}

\bib{borel}{book}{
    AUTHOR = {Borel, Armand},
     TITLE = {Linear algebraic groups},
    SERIES = {Graduate Texts in Mathematics},
    VOLUME = {126},
   EDITION = {Second},
 PUBLISHER = {Springer-Verlag},
   ADDRESS = {New York},
      YEAR = {1991},
     PAGES = {xii+288},
      ISBN = {0-387-97370-2},
   MRCLASS = {20-01 (20Gxx)},
  MRNUMBER = {1102012 (92d:20001)},
MRREVIEWER = {F. D. Veldkamp},
       DOI = {10.1007/978-1-4612-0941-6},
       URL = {http://dx.doi.org/10.1007/978-1-4612-0941-6},
}

\bib{valuedfields}{article}{
    AUTHOR = {Bosch, S.}, Author={G\"untzer, U.}, author={Remmert, R.},
     TITLE = {Non-{A}rchimedean analysis},
    SERIES = {Grundlehren der Mathematischen Wissenschaften [Fundamental
              Principles of Mathematical Sciences]},
    VOLUME = {261},
      NOTE = {A systematic approach to rigid analytic geometry},
 PUBLISHER = {Springer-Verlag, Berlin},
      YEAR = {1984},
     PAGES = {xii+436},
      ISBN = {3-540-12546-9},
   MRCLASS = {32K10 (30G05 46P05)},
  MRNUMBER = {746961},
MRREVIEWER = {W. Bartenwerfer},
       DOI = {10.1007/978-3-642-52229-1},
       URL = {https://doi.org/10.1007/978-3-642-52229-1},
}

\bib{effros}{article}{
    AUTHOR = {Effros, Edward G.},
     TITLE = {Transformation groups and {$C^{\ast} $}-algebras},
   JOURNAL = {Ann. of Math. (2)},
  FJOURNAL = {Annals of Mathematics. Second Series},
    VOLUME = {81},
      YEAR = {1965},
     PAGES = {38--55},
      ISSN = {0003-486X},
   MRCLASS = {46.65},
  MRNUMBER = {0174987 (30 \#5175)},
MRREVIEWER = {J. M. G. Fell},
}

\bib{margulis-book}{book}{
    AUTHOR = {Margulis, G. A.},
     TITLE = {Discrete subgroups of semisimple {L}ie groups},
    SERIES = {Ergebnisse der Mathematik und ihrer Grenzgebiete (3) [Results
              in Mathematics and Related Areas (3)]},
    VOLUME = {17},
 PUBLISHER = {Springer-Verlag},
   ADDRESS = {Berlin},
      YEAR = {1991},
     PAGES = {x+388},
      ISBN = {3-540-12179-X},
   MRCLASS = {22E40 (20Hxx 22-02 22D40)},
  MRNUMBER = {1090825 (92h:22021)},
MRREVIEWER = {Gopal Prasad},
}

\bib{Rosendal}{article}{
      author={Rosendal, Christian},
       title={Automatic continuity of group homomorphisms},
        date={2009},
     journal={Bull. Symbolic Logic},
      volume={15},
      number={2},
       pages={184-- 214},
}

\bib{zimmer-book}{book}{
   author={Zimmer, R. J.},
   title={Ergodic theory and semisimple groups},
   series={Monographs in Mathematics},
   volume={81},
   publisher={Birkh\"auser Verlag},
   place={Basel},
   date={1984},
   pages={x+209},
   isbn={3-7643-3184-4},
   review={\MR{776417 (86j:22014)}},
}

\end{biblist}
\end{bibdiv}

\end{document}